\documentclass{article}
\usepackage[utf8]{inputenc}

\newcounter{rmk}

\usepackage{amsmath}
\usepackage{float}

\usepackage{tikz}
\usetikzlibrary{shapes, arrows}

\usepackage{comment}
\usepackage[linktocpage=true]{hyperref}
\usepackage[margin=1.2 in, top=1 in, bottom= 1.2 in]{geometry}

\usepackage{indentfirst} 
\usepackage[english]{babel}
\usepackage{amsfonts}
\usepackage{mathrsfs}
\usepackage[mathscr]{euscript}
\usepackage{bbm}
\usepackage{latexsym}
\usepackage{mathdots}
\usepackage{amssymb}
\usepackage{mathtools}
\usepackage{graphicx}
\usepackage{tikz}
\usepackage{placeins}
\usepackage{esint}

\usepackage{enumitem}
\setlist{nolistsep}
\usepackage{amsthm}
\usepackage{relsize}
\usepackage{nicefrac}
\usepackage[capitalize]{cleveref}

\newtheoremstyle{plain}{3mm}{3mm}{\slshape}{}{\bfseries}{.}{.5em}{}
\newtheoremstyle{definition}{2mm}{2mm}{}{}{\bfseries}{.}{.5em}{}
\theoremstyle{plain}
	
\newtheorem{theorem}{Theorem}

\newtheorem{lemma}[theorem]{Lemma}

\theoremstyle{definition}

\newtheorem{remark}[rmk]{Remark}

\theoremstyle{plain}
\newtheorem*{namedthm}{\namedthmname}
\newcounter{namedthm}
	

\newcommand{\R}{\mathbb{R}}

\newcommand{\D}{\mathcal{D}}

\newcommand{\eps}{\epsilon}

\newcommand{\A}{\mathcal{A}}
\newcommand{\B}{\mathcal{B}}
\newcommand{\G}{\mathcal{G}}

\title{Dirichlet Green's functions with singular drifts at the boundary of convex domains.}
\author{Aritro Pathak}

\begin{document}

\maketitle

\begin{abstract}
In convex bounded domains, in $\R^{n}$ with $n\geq 3$, we establish interior pointwise upper bounds for the Dirichlet Green’s function of elliptic operators whose principal part is the Laplacian and which include a drift term that diverges near the boundary like a negative power of the distance with exponent strictly less than $1$. This work extends an earlier result for operators with such drifts in the unit ball, and streamlines the proof in particular to adapt it to the question in convex domains.
\end{abstract}

\section{Introduction}

In this paper, we consider $K\in\R^{n}$ with $n\geq 3$, a convex bounded domain, with a specified interior point which we call the origin $0$, so that the ball $B_0:=B(0, R)\in \text{int}(K)$. We denote the ball $$B_1:=B(0,\frac{1}{2}R).$$ 

We also denote, $\delta(X)=\text{dist}(X,\partial K)$, the distance to the boundary $\partial K$. The coordinates in $\R^{n}$ are written as $(x_1,\dots,x_n)$. In this paper, we extend the result of \cite{Pat25} where the domain was the unit ball. We consider in $\R^{n}$ the linear second order operator with a singular drift term, given for some $0<\beta<1$, 
\begin{equation}\label{Laplacian}
    L u=-\Delta u+\B\cdot \nabla u =0.
\end{equation}

We also have, 

 \begin{align}\label{driftt}
     |\B(X)|\delta(X)^{1-\beta}\leq M.
\end{align}

 Here $0< M<\infty$.

 As in \cite{Pat25}, we require the pointwise condition that, 
     \begin{align}\label{nonpositivee}
         -\frac{M^2}{\delta(x)^{2-2\beta}}\leq \nabla\cdot \B\leq 0, \ 
     \end{align}
     
     When the drift satisfies the pointwise estimate of \cref{nonpositivee}, then according to Theorem 1.2 of \cite{Ha24}, solutions to the adjoint equation $L^{T} u=-\nabla\cdot(\nabla u+\B u)=0$ exists, and one considers the Green's functions $\G(x,y),\G_T (x,y)$  corresponding to $L$ and $L_T$ respectively. Further, we then also have $\G(x,y)=\G_T(y,x)$. 

Here we also outline an alternate argument which ultimately uses the Fredholm alternative, to show existence of solutions for both the operator as well as the adjoint operator. 

More generally, we can define the operators
\begin{equation}\label{imp22}
    L_1 u=-\nabla\cdot(\A\nabla u) +\B\cdot \nabla u =0,\ \ L_{1}^{T} u=-\nabla\cdot(\A\nabla u +\B\cdot \nabla u),  \ \   
\end{equation}

where we have constants $\infty> \Lambda\geq \lambda>0$ , and $M$ as before, so that, 
\begin{align}\label{imp23}
    \A_{ij}(X) \eta_i \eta_j \geq \lambda |\eta|^2, |\A_{ij}(X)\eta_i \psi_j|\leq \Lambda |\eta| |\psi|, \ \ \text{for all} \ \eta,\psi\in \R^{n}\setminus \{0\}, \ \ \text{and} \ \delta(X)^{1-\beta}|\B(X)|\leq M.
\end{align}

Here we state the main result of this paper:

\begin{theorem}\label{thm2}
    Consider the operator of \cref{Laplacian} and the Dirichlet Green's function corresponding to this operator. Further suppose that $\B\in C^{0,\alpha}(\Omega')$ for some $\alpha>0$, for any compactly supported subdomain  $\Omega'\subset K$, and that $\B$ satisfies  \cref{nonpositivee2}.  For any $x\in K$ with $|x|\leq \frac{1}{2}$, we have for some constant $T'$ dependent on $M, \lambda$ only,
    \begin{equation}
        \G(x,0)\leq T'(M,\lambda)\frac{1}{|x|^{n-2}}.
    \end{equation}
\end{theorem}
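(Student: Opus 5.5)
We bound $\G(x,0)=\G_T(0,x)$ by comparing with the Newtonian potential, using a Caccioppoli and duality argument for the adjoint Green's function. This is the scheme of \cite{Pat25}, with the unit ball replaced by the convex domain $K$.

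\textbf{Localization.} Fix a cutoff $\eta\in C_c^\infty(B_0)$ with $\eta\equiv1$ on $B_1$. Write $\G(\cdot,0)=\eta\G(\cdot,0)+(1-\eta)\G(\cdot,0)$.
\begin{itemize}
\item Near the pole, the drift is bounded: on $\text{supp}\,\eta$ we have $|\B|\leq M\,\text{dist}(B_0,\partial K)^{\beta-1}$. The classical Grüter--Widman theory for bounded drifts then gives $\G(x,0)\lesssim |x|^{2-n}$ for $|x|$ small. This needs an a priori bound on $\G(\cdot,0)$ in $L^1$, or weak-$L^{n/(n-2)}$, on the annulus $B_0\setminus B_1$ to close the local estimate.
\item So the real task is a global integral bound on $\G(\cdot,0)$ away from $0$, uniform in $K$. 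Near $\partial K$ the drift is unbounded.
\end{itemize}

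\textbf{Global integral bound by duality.} Let $f\geq0$ be supported in $K\setminus B_1$. Solve $L^T v=f$, or equivalently $Lu=f$ with zero boundary data, so that $u(0)=\int \G(0,y)f(y)\,dy$, with the roles of $x,y$ matched to the adjoint representation. We need a uniform bound on $u(0)$ in terms of $\|f\|_{L^\infty}$ or $\|f\|_{L^p}$.
\begin{itemize}
\item Since $\nabla\cdot\B\leq0$, testing with $u$ gives
\[
\int|\nabla u|^2-\tfrac12\int(\nabla\cdot\B)u^2=\int fu,
\]
so the drift contributes a nonnegative term and $\|\nabla u\|_{L^2}\lesssim\|f\|_{L^{2n/(n+2)}}$.
\item Boundedness at the interior point $0$ then follows from local Moser iteration in $B_0$, where $\B$ is bounded. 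The Hölder continuity of $\B$ on compact subdomains ensures that the Green's functions are classical away from the pole.
\item This yields $\|\G(\cdot,0)\|_{L^{2n/(n-2)}(K\setminus B_1)}\leq C$. Combined with the local step, this gives the pointwise bound on $B_1$.
\end{itemize}

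\textbf{Main obstacle: uniformity in $K$ near the boundary.} Nothing above uses convexity except to control the geometry. The real difficulty is that the paper's Fredholm existence argument needs $\B u$ controlled in $L^2$. We plan to prove this via a Hardy-type inequality on convex domains:
\[
\int_K \frac{u^2}{\delta^{2-2\beta}}\leq \int_K\frac{u^2}{\delta^2}\cdot\text{diam}(K)^{2\beta}\leq 4\,\text{diam}(K)^{2\beta}\int_K|\nabla u|^2 .
\]
Here the constant $4$ is the sharp Hardy constant, valid for convex domains.
\begin{itemize}
\item This Hardy inequality absorbs the lower bound in \cref{nonpositivee}.
\item It also gives $\|\B u\|_{L^2}\leq 2M\,\text{diam}(K)^\beta\|\nabla u\|_{L^2}$.
\end{itemize}
Hence the energy estimates hold with constants depending on $M$, $\beta$, $n$ and the geometry of $K$, namely $R$ and $\text{diam}(K)$. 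These constants are then absorbed into $T'$.
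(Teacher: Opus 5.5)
Your argument is correct in substance, and it takes a genuinely different, much more standard route than the paper.

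\textbf{Your route.} The point you exploit is that, under \cref{nonpositivee}, both forms satisfy $\mathcal{L}(u,u)=\mathcal{L}_T(u,u)=\|\nabla u\|_2^2-\frac12\int_K(\nabla\cdot\B)\,u^2\geq\|\nabla u\|_2^2$. Hence the adjoint problem obeys the drift-free bound $\|\nabla v\|_2\leq C_S\|f\|_{L^{2n/(n+2)}}$. Since the pole is interior, you only need De Giorgi--Nash--Moser estimates in $B_1$, where $|\B|\leq M(R/2)^{\beta-1}$. Duality plus a local Gr\"uter--Widman step then closes the argument. The local step can even be done directly at scale $r=|x|\leq R/4$. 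Take $f=\chi_{B(x,r/2)}$; then $\int_{B(x,r/2)}\G(y,0)\,dy=v(0)\lesssim r^{-n/2}\,r\,\|\nabla v\|_2\lesssim r^{1-n/2}r^{(n+2)/2}=r^2$, and local boundedness gives $\G(x,0)\lesssim r^{-n}\int_{B(x,r/2)}\G(y,0)\,dy\lesssim r^{2-n}$.

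\textbf{The paper's route.} The paper never uses this coercivity for the estimate. It argues by contradiction through the distribution function $f(s)=|\{\G\geq s\}|^{(n-2)/n}$. It follows maximum and minimum points of $\G(\cdot,0)$ on spheres and on the Minkowski interpolants $\partial K_y$. At these extremal points it derives differential inequalities for the normal derivative from the sign of the tangential second derivatives; this needs $\B\in C^{0,\alpha}$ to justify $C^{2,\alpha}$ Taylor expansions. It uses convexity to keep the drift integrable along the interpolation via $\delta(s_y)\geq y(h_{\min}-\frac R2)$. It closes with \cref{lemma4,lemma5} and \cref{thm1}.

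\textbf{What each buys.} Your route is far shorter and needs no H\"older continuity of $\B$. It uses convexity only through a Hardy inequality, and only qualitatively: to make the forms bounded on $W^{1,2}_0(K)$, and to justify $\int_K(\B\cdot\nabla u)u=-\frac12\int_K(\nabla\cdot\B)u^2$ by density. A Hardy inequality with some constant holds on any bounded Lipschitz domain. Your final constant depends only on $n,M,\beta,R$, not on $\text{diam}(K)$ as you wrote. In the paper, the sign of $\nabla\cdot\B$ enters only through existence of $\G$ and $\G_T$, and the estimate is driven by the pointwise size of $|\B|$ at extremal points. Under the hypotheses actually stated, your argument shows that this machinery is not needed for the theorem.

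\textbf{Slips to fix.}
\begin{itemize}
\item $L^Tv=f$ and $Lu=f$ are not equivalent. Since $\G(y,0)=\G_T(0,y)$, it is the solution of $L^Tv=f$ that satisfies $v(0)=\int_K\G(y,0)f(y)\,dy$. This is harmless: both forms obey the same identity, and for $L^Tv=-\nabla\cdot(\nabla v+\B v)$ local boundedness at $0$ needs only $\B\in L^\infty(B_1)$.
\item $\text{dist}(B_0,\partial K)$ may vanish, since $B_0$ can touch $\partial K$. Bound the drift on $\text{supp}\,\eta$ instead, e.g.\ using $\delta\geq R/4$ on $B(0,\frac34R)$.
\item The \emph{main obstacle} you describe concerns only well-posedness, not the estimate itself.
\end{itemize}
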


The following two results are recalled from \cite{Pat25}, and the proofs remain identical in the present setting; the proof of \cref{thm3} follows by considering the adjoint Green's function along with repeated use of the Harnack inequality, once \cref{thm1} is established.

\begin{theorem}\label{thm1}
    For the elliptic operator in \cref{imp22} with the coefficients being only measurable, satisfying \cref{imp23}, we have the bound for the Green's function for the operator in \cref{imp22}: for any $z,y\in K$ with $|z-y|\leq \frac{1}{2}\delta(y):=\frac{1}{2}\text{dist}(y,\partial K)$ we have 
    \begin{equation}\label{lower}
        \G(y,z)\geq T(M,\lambda)\frac{1}{|z-y|^{n-2}}.
    \end{equation}
\end{theorem}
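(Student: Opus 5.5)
We will prove \cref{thm1} by a purely local argument. The drift condition \cref{imp23} makes the drift harmless at the scale of $\delta(y)$. Concretely, we rescale the ball $B(y,\delta(y)/2)$ to unit size and compare $\G(y,\cdot)$ there with the Green's function of that ball.

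Fix $y\in K$ and set $r=\delta(y)$. On $B(y,\tfrac34 r)$ we have $\delta(X)\geq r/4$, hence $|\B(X)|\leq 4^{1-\beta}M r^{\beta-1}$ there. Rescale by $X=y+rZ$. The operator becomes $-\nabla\cdot(\tilde{\A}\nabla)+\tilde{\B}\cdot\nabla$ on $B(0,\tfrac34)$, with $\tilde{\B}(Z)=r\B(y+rZ)$. Thus
\[
|\tilde{\B}|\leq 4^{1-\beta}Mr^{\beta}\leq C(M,\operatorname{diam}K),
\]
since $r\leq \operatorname{diam} K$. The rescaled drift is therefore bounded uniformly, and the Green's function scales as $\G(y,z)=r^{2-n}\tilde{\G}(0,(z-y)/r)$.

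Next we use the maximum principle. Let $\G_{D}$ be the Dirichlet Green's function of the same operator on $D=B(y,r/2)\subset K$. The difference $\G(y,\cdot)-\G_{D}(y,\cdot)$ (taken for the adjoint in the second variable, as appropriate) solves the homogeneous equation in $D$ and is nonnegative on $\partial D$. The maximum principle, which holds since the drift is bounded in $D$, then gives $\G\geq \G_{D}$. This reduces \cref{lower} to the same lower bound for the Green's function of a unit ball with bounded measurable coefficients and bounded drift, at points with $|z-y|\leq\frac12\cdot\frac12$ after rescaling. (To cover the full range $|z-y|\leq\frac12\delta(y)$, use $D=B(y,\tfrac34 r)$ instead.)

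For that fixed-scale problem we invoke the known Green's function estimates for operators with bounded lower-order coefficients, as in Grüter–Widman or Kim–Sakellaris. They give the lower bound $\tilde{\G}(0,w)\geq c|w|^{2-n}$ for $|w|$ small compared with the distance to the boundary. Extending this to $|w|\leq 1/2$, when the pole sits at distance $3/4$ from the boundary, follows from a Harnack chain of bounded length along the annuli $\{|w|\sim 2^{-k}\}$.

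The main obstacle is obtaining this small-scale lower bound with constants depending only on $\lambda,\Lambda,M$ (and $\operatorname{diam}K$), and not on smoothness. I would first try the standard argument:
\begin{itemize}
\item test the equation against a cutoff $\phi$ equal to $1$ on $B(0,s)$, so that $1=\int \A\nabla\G\cdot\nabla\phi+\B\cdot\nabla\G\,\phi$;
\item bound the drift term by $Cs\cdot s^{-1}\cdot\sup_{|w|\sim s}\G\, s^{n-1}$ using Caccioppoli;
\item combine with Harnack on the annulus $|w|\sim s$ to get $\sup_{|w|\sim s}\G\gtrsim s^{2-n}$, provided $s$ is small enough that the drift contribution, of relative size $O(s)$, is absorbed.
\end{itemize}
Finally, Harnack propagates the bound to $|w|\leq 1/2$.
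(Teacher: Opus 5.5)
The paper gives no argument for this statement. It imports the result from \cite{Pat25}, remarking only that the unit-ball proof carries over unchanged, so there is nothing to match step by step. Your reduction is a genuinely different, self-contained route, and it is sound once two steps are tightened (below). It also shows why the paper's transfer is legitimate. Everything happens in $D=B(y,\tfrac{3}{4}\delta(y))$, where the only information about $\B$ that enters is $|\B|\le 4^{1-\beta}M\delta(y)^{\beta-1}$. So neither the geometry of the ball nor the convexity of $K$ plays any role. After rescaling by $r=\delta(y)$, the subcritical exponent $1-\beta<1$ gives $|\tilde{\B}|\le 4^{1-\beta}Mr^{\beta}$, which is uniformly bounded (and small when $\delta(y)$ is small). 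That reduces the claim to fixed-scale De Giorgi--Nash--Moser estimates for a bounded drift. What you gain is modularity and validity in any bounded domain. The price is that your constant visibly depends on $n,\lambda,\Lambda,\beta,M$ and $\operatorname{diam}K$, dependencies that the $T(M,\lambda)$ of the statement leaves implicit.

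The two steps to tighten are best done in opposite variables.

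\textbf{The comparison $\G\ge\G_D$.} Work in the first variable. The function $x\mapsto\G(x,z)-\G_D(x,z)$ solves $L_1(\cdot)=0$ in $D$ with nonnegative boundary values; this is cleanest via approximate Green's functions with the pole smeared over $B(z,\rho)$, then letting $\rho\to0$. For the non-divergence drift $\B\cdot\nabla u$, the weak maximum principle (Theorem 8.1 of \cite{Gt}) needs no size or sign condition on the bounded $\B$. This gives $\G(y,z)\ge\G_D(y,z)$ for all $y,z\in D$. In the second variable you would instead face $-\nabla\cdot(\A^{T}\nabla v+\B v)$, for which that criterion demands $\nabla\cdot\B\le0$. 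That is not assumed in this theorem, so ``the drift is bounded in $D$'' is not the right justification there.

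\textbf{The lower bound.} The identity you test, $1=\int\A\nabla\G\cdot\nabla\phi+\B\cdot\nabla\G\,\phi$, is the first-variable one. Its drift term is an integral over all of $\operatorname{supp}\phi\supset B(0,s)$, pole included, so your annulus bound via Caccioppoli does not control it. Since you are estimating $\tilde{\G}(0,\cdot)$, i.e.\ working in the second variable, use the adjoint identity $1=\int\A^{T}\nabla\G\cdot\nabla\phi+\G\,\B\cdot\nabla\phi$ (rescaled coefficients). Its drift term lives only where $\nabla\phi\neq0$, so your bound $Cs^{n-1}\sup_{|w|\sim s}\G$ is then exactly right. No absorption or smallness of $s$ is needed, and Caccioppoli and Harnack (Theorem 8.20 of \cite{Gt}) for the divergence-form drift require only boundedness of $\tilde{\B}$. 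If you keep the first-variable identity, you must instead import an a priori upper bound $\tilde{\G}\lesssim|w|^{2-n}$ to obtain $\int_{B(0,2s)}|\nabla\tilde{\G}|\lesssim s$.
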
 

Here, the pole of Green's function has been considered to be only at the origin. While we emphasize that this manuscript and \cite{Pat25} deal with Dirichlet Green's functions, we will simply call it the Green's function in this manuscript, without any scope for confusion.

We can use \cref{thm2} and a standard argument using Harnack inequality and repeated change of poles of Green's function to show the following result.
\begin{theorem}\label{thm3}
    Consider the operator of \cref{Laplacian} and the Dirichlet Green's function corresponding to this operator. Further suppose that $\B\in C^{0,\alpha}(\Omega')$ for some $\alpha>0$, for any compactly supported subdomain $\Omega'\subset K$, and that $\B$ satisfies  \cref{nonpositivee2}.  Then for any $y\in K_r:=\{x:\delta(x)\geq r\}$ with $r<1$, and for any $x\in \Omega$ with $|x-y|\leq \frac{1}{2}\delta(y)$, we have for some constant $T_{1}^{'}$ dependent on $M, \lambda,r$ only,
    \begin{equation}
        \G(x,y)\leq T_{1}^{'}(M,\lambda,r)\frac{1}{|x-y|^{n-2}}.
    \end{equation}
\end{theorem}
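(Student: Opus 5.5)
We deduce \cref{thm3} from \cref{thm2} by moving the pole from the origin to an arbitrary $y\in K_r$ with a Harnack chain, using the adjoint Green's function to pass between the two variables. The key point is that on $K_{r/2}$ the drift is bounded: by \cref{driftt}, $|\B|\leq M (r/2)^{\beta-1}$ there. So the local theory for $L$ and $L^T$ (interior Harnack inequality, local boundedness) holds on balls inside $K_{r/2}$ with constants depending only on $M,\lambda,r$. Throughout, the adjoint relation $\G(x,y)=\G_T(y,x)$ recalled from \cite{Ha24} lets us view $\G(x,y)$ either as an $L$-solution in $x$ or as an $L^T$-solution in $y$, away from the pole.

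\textbf{Step 1 (pole at a nearby point).} First we show that \cref{thm2} holds with the origin replaced by any point $p$ with $\delta(p)\geq r$. There are two ways to do this.

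\emph{Route (a).} Rerun the proof of \cref{thm2} with $p$ as the centre. The hypotheses there are translation invariant: $K$ stays convex, $B(p,r)\subset K$, and \cref{driftt}, \cref{nonpositivee} are unchanged. The only change is that the constant now depends on $r$ through the inner radius.

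\emph{Route (b).} If we want to treat \cref{thm2} as a black box, we do the following.
\begin{enumerate}
\item Cover the compact set $\overline{K_r}$ by a chain of balls $B(p_i,r/8)$, $i=0,\dots,N$, with $p_0=0$ and $N=N(r,K)$.
\item Fix $x$ with $|x-y|\leq \tfrac12\delta(y)$. For $z$ outside a small ball around $x$, the function $z\mapsto \G(x,z)=\G_T(z,x)$ is a nonnegative $L^T$-solution.
\item Harnack along the chain then compares $\G(x,y)$ with $\G(x,0)$, at the cost of a factor $C(M,\lambda,r)^N$.
\item Bound $\G(x,0)$ using \cref{thm2}, after a symmetric argument that lets us work with the first variable near the origin.
\end{enumerate}
This route is only clean when $|x-y|$ is comparable to $r$.

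\textbf{Step 2 (near-diagonal regime).} When $|x-y|$ is small, we argue by scaling around $y$. On $B(y,\delta(y)/2)\subset K_{r/2}$, the rescaled operator has drift of size at most $\rho\, M (r/2)^{\beta-1}$ on a ball of radius $\rho$, so it is a small perturbation of the Laplacian. We compare $\G(\cdot,y)$ on $B(y,\rho)$ with the Green's function of that ball, using the maximum principle. The boundary values are controlled by $\sup_{\partial B(y,\rho)}\G(\cdot,y)$. That supremum is bounded by Step 1 together with local boundedness, since $\partial B(y,\rho)$ lies at distance $\rho$ from the pole.

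\textbf{Step 3 (the singular part).} What remains is to show that the local Green's function on $B(y,\rho)$ with bounded drift satisfies $\lesssim |x-y|^{2-n}$. This is the Grüter--Widman type estimate for bounded drift. We will obtain it by the standard truncation and Moser argument, or by a perturbation of the Newtonian potential using the Hölder continuity of $\B$ assumed in the statement. Combining Steps 2 and 3 gives $\G(x,y)\leq C|x-y|^{2-n}+\sup_{\partial B(y,\rho)}\G\leq T_1'|x-y|^{2-n}$, because $|x-y|\leq$ diam.

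\textbf{Main obstacle.} The hard part is Step 1: getting a bound on $\sup_{\partial B(y,\rho)}\G(\cdot,y)$ that does not depend on how close $y$ lies to $\partial K$ beyond the fixed level $r$. Here the degeneracy of the drift near $\partial K$ could enter. I would handle this through Route (a), noting that the proof of \cref{thm2} uses only convexity of $K$, \cref{nonpositivee}, and the inner ball, all of which are available with centre $y$ and radius $r$.
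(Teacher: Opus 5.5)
Your proof is correct and more explicit than the paper's. The paper only asserts that \cref{thm3} follows from \cref{thm2} via the adjoint Green's function and ``repeated use of the Harnack inequality'' with changes of pole; that is your Route (b). It also remarks at the end that the Minkowski-interpolation argument can be rerun with the pole closer to $\partial K$, at the cost of worse bounds on the ratios $r_y$; that is your Route (a).

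What you genuinely add is the split into an off-diagonal and a near-diagonal regime. Steps 2--3 treat the near-diagonal regime by comparing with the Green's function $G_\rho(\cdot,y)$ of a small ball $B(y,\rho)\subset K_{r/2}$, on which $|\B|\leq M(r/2)^{\beta-1}$. Harnack chains cannot supply this. To carry a configuration with pole $0$ and $|x|$ small to one with pole $y$ and $|x-y|$ small, each step can move the pole only a fraction of its distance to the other point. So you need about $|y|/|x-y|$ steps, and the constant degenerates exponentially. The $|x-y|^{2-n}$ blow-up at a new pole therefore has to come from a local estimate, and the paper leaves this implicit.

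However, you misplace the difficulty. Route (b) already gives $\sup_{\partial B(y,\rho)}\G(\cdot,y)\leq C(M,\lambda,r)$, for the following reasons.
\begin{itemize}
\item The chain from $y$ to $0$ can be kept inside the convex set $K_{r/2}$ (taking $r\leq R$, as you may) and away from $x$. If $x$ is near $0$, first move $x$ by a chain in the first variable, where $\G(\cdot,y)$ is an $L$-solution.
\item All Harnack constants for $L$ and $L^T$ along such chains then depend only on $M,\lambda,r$ and $K$.
\item $\G(x',0)$ is bounded by \cref{thm2} for $|x'|\leq\frac12$, and by the maximum principle on $K\setminus B(0,\frac12)$ otherwise. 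No ``symmetric argument'' is needed.
\item Points near $\partial K$ are handled by the maximum principle on $K\setminus B(y,\rho)$.
\end{itemize}

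So Route (b) together with Steps 2--3 is a complete proof that uses \cref{thm2} as a black box. Route (a), by contrast, requires re-verifying the entire proof of \cref{thm2} at a new centre. If it is accepted, it yields the near-pole bound by itself, and Steps 2--3 become redundant.

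Two details are needed to make Step 2 rigorous.
\begin{itemize}
\item Take $\rho\lesssim r^{1-\beta}/M$, or else use bounded-drift Green's function bounds with constants depending on $\rho\,\|\B\|_{L^\infty(B(y,\rho))}$.
\item Carry out the maximum-principle comparison for approximate Green's functions, or via the representation of $\G(\cdot,y)-G_\rho(\cdot,y)$ by $L$-harmonic measure on $\partial B(y,\rho)$. You do not know a priori that this difference has a removable singularity at $y$.
\end{itemize}
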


Since the drift is in $C^{0,\alpha}$ in compact subsets, one gets that the Green function is also locally in $C^{2,\alpha}$ in domains bounded away from both the boundary and the pole.


In \cite{Naz12} Nazarov and Ural’tseva study the properties of the solutions for elliptic operators with  drifts whose divergence is negative in distributional sense, in the scale of Morrey spaces.

Gruter and Widman studied the Green function for the divergence form operator in \cite{GrWi}. In \cite{MM22} McOwen and Maziya study the fundamental solution for non-divergence principal terms and drifts satisfying point divergences with a square Dini condition on the coefficient of the drift. Previous results of Cranston-Zhao, Ifra-Riahi, Mourgoglou, Kim-Sakellaris, Sakellaris \cite{An86, CrZh87,IfLo05,Singdr,KimSa,Morg,Sake} have studied drifts in $L^p$ spaces, certain Kato and Lorenz spaces. In particular, the necessity of $|\B|^2\in K_{n}$ has appeared in the arguments of \cite{CrZh87, IfLo05,Morg}. The drift in consideration in this paper does not satisfy this condition.

 In a separate direction, in work of Ancona and Hofmann-Lewis \cite{An86,Singdr}, one shows existence of solutions and pointwise estimates for Green functions for drifts that behave like $\epsilon/\delta(X)$ for some sufficiently small $\epsilon$. Such a drift does not satisfy $|\B|^{2}\in K_n$.

The results of \cite{Pat25} and the present work lie in a regime where perturbative “hiding” estimates are not available for the lower order terms of the bilinear form corresponding to \cref{Laplacian}.

An outline of the arguments of this paper is given in Section 4.3 .


\section{Notation and terminology.}

We call the diameter of $K$ as $\text{diam}(K)=D$. By a maximum point on a co-dimension one compact hypersurface, we mean a point, not necessarily unique, where the Green's function is maximized on that hypersurface. By a minimum point on a co-dimension one hypersurface, we mean a point, not necessarily unique, where the Green function is minimized on that hypersurface. These hypersurfaces under consideration will be certain spheres, and sets of the form $\partial K_y$ where $K_y:= (1-y)K +yB_1$ in a Minkowski sum defined during the proof of \cref{thm2}.

We term the `near field effect' as the argument of looking at the minimum points within the ball $B(0, 1/L)\setminus B(0, b_k)$, and controlling the gradients at the minimum points of $B(0,1/L)$. By the `far field effect' we mean the effect of looking at the control of the decay of the gradient in the region $K\setminus B_1$, as articulated in Subsection 2.1. We have used the term `far field' a bit loosely since in the far field case, we will be controlling the gradient at maximum points in the entire domain $K\setminus B(0,b_j)$ for sequences of points $b_j|_{j\geq 1}$ that are arbitrarily close to the origin. We will use this to control the maximum value of the Green's function on the the sphere $S(0,1/L)=\partial B(0,1,L)$. 

We also denote by $W^{1,2}_{0}(K)$ the closure of the space of smooth compactly supported functions on $K$, in the $W^{1,2}(K)$ norm. We denote the norm of a function $f\in L^{2}(K)$ as $||f||_2$, and the $W^{1,2}(K)$ norm of a function $g\in W^{1,2}(K)$ as $||g||_{1,2}:=||g||_2 +||\nabla g||_2$.
 

\section{Outline for existence of solutions to $L, L_{T}$.}

Below we fix a positive value of $\eps$ small enough so that 
\begin{align}\label{eps}
    1-(M^{\frac{1}{\beta}}+C_H)\eps -(C_H)^{2} \eps^2 >\frac{1}{2},
\end{align}

where $C_H$ is the constant arising from the Hardy inequality of \cref{hardy}.

We outline the proof of the following: 

 \begin{theorem}\label{existence}
     Consider the operators of \cref{imp22} with the drift term $|\B|$ satisfying the \cref{driftt}. Then there exists a weak solution $u\in W^{1,2}_{0}(K)$ that satisfies $L_1 u=g+\partial_i f_i$. With the assumption of \cref{driftt}, under the further hypothesis of \cref{nonpositivee} on the divergence of the drift, we also have the existence of $v\in W^{1,2}_0(K)$ that solves $L^{T}_{1}v=g+\partial_i f_i$. Here, $g,f_i\in L^{2}(K)$.
 \end{theorem}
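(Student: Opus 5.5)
The plan is to prove existence for $L_1$ by a Gårding inequality plus the Fredholm alternative, and then to get existence for $L_1^T$ by duality, using \cref{nonpositivee} to rule out a nontrivial kernel. Consider the bilinear form on $W^{1,2}_0(K)$,
\[
\mathcal{L}(u,\varphi)=\int_K \A\nabla u\cdot\nabla\varphi+(\B\cdot\nabla u)\varphi .
\]

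\textbf{Step 1: boundedness of the form.} Since $|\B|\le M\delta^{\beta-1}$ and $\beta>0$, the Hardy inequality of \cref{hardy} gives $\|\varphi/\delta\|_2\le C_H\|\nabla\varphi\|_2$. Hence
\[
\Big|\int(\B\cdot\nabla u)\varphi\Big|\le M D^{\beta}\,\|\nabla u\|_2\,\|\varphi/\delta\|_2\le MD^\beta C_H\|\nabla u\|_2\|\nabla\varphi\|_2 .
\]
So $\mathcal{L}$ is bounded on $W^{1,2}_0(K)$.

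\textbf{Step 2: Gårding inequality.} Split $K$ into a boundary strip $\{\delta<\eps\}$ and the interior $\{\delta\ge\eps\}$, with $\eps$ as fixed in \cref{eps}.

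In the interior, $|\B|\le M\eps^{\beta-1}$ is bounded. Young's inequality then gives
\[
\Big|\int_{\delta\ge\eps}(\B\cdot\nabla u)u\Big|\le\tfrac{\lambda}{4}\|\nabla u\|_2^2+C(\eps)\|u\|_2^2 .
\]

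In the strip, $|\B|\le M\delta^{\beta-1}=M\delta^{\beta}\cdot\delta^{-1}\le M\eps^{\beta}\delta^{-1}$. Combined with Hardy, the strip contribution is at most $M\eps^\beta C_H\|\nabla u\|_2^2$. The choice of $\eps$ in \cref{eps} is designed to make this a small multiple of $\|\nabla u\|_2^2$, for instance $\le\frac{\lambda}{4}\|\nabla u\|_2^2$ after normalizing the constants. A cutoff in $\delta$ may be used to localize smoothly, with error terms absorbed via Hardy in the same way.

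Together these yield
\[
\mathcal{L}(u,u)+C(\eps)\|u\|_2^2\ge\tfrac{\lambda}{2}\|\nabla u\|_2^2 .
\]
By Lax–Milgram, the shifted operator $L_1+\mu$ with $\mu=C(\eps)$ is invertible from $W^{1,2}_0$ to $W^{-1,2}$. Its inverse composed with the compact embedding $W^{1,2}_0\hookrightarrow L^2$ (Rellich) is compact on $L^2$. The Fredholm alternative then reduces solvability of $L_1u=g+\partial_if_i$ for every datum to uniqueness, for both $L_1$ and $L_1^T$ (the adjoint problem has the same index).

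\textbf{Step 3: uniqueness for $L_1$.} This is the step I expect to be the main obstacle, since the drift cannot be hidden. I would argue by the weak maximum principle. Suppose $L_1u=0$ with $u\in W^{1,2}_0$. Test with $(u-k)^+$ for $k\ge0$ and use the Stampacchia argument: on the support of $(u-k)^+$ we have $\nabla u=\nabla(u-k)^+$. The drift term is controlled by the same near-boundary/interior splitting, applied on the level sets, together with the fact that the measure of $\{u>k\}$ tends to zero. The Trudinger-type maximum principle for drifts with $|\B|\delta\to0$ at the boundary should give $u\le0$, and likewise $-u\le0$, so $u=0$.

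\textbf{Step 4: the adjoint.} Fredholm already yields $\dim\ker L_1^T=\dim\ker L_1=0$, so $L_1^T$ is solvable as well. The hypothesis \cref{nonpositivee} can also be used directly to confirm uniqueness. For a solution $v$ of $L_1^Tv=0$, test with $v$ and integrate the drift term by parts:
\[
\int \B v\cdot\nabla v=\tfrac12\int\B\cdot\nabla(v^2)=-\tfrac12\int(\nabla\cdot\B)v^2\ge0 .
\]
The boundary term vanishes since $v\in W^{1,2}_0$, and $v^2(\nabla\cdot\B)$ is integrable by Hardy, because $|\nabla\cdot\B|\le M^2\delta^{2\beta-2}\le CD^{2\beta}\delta^{-2}$. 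This gives $\lambda\|\nabla v\|_2^2\le0$, so $v=0$. The same computation with $\nabla\cdot\B\le0$ also gives coercivity for the adjoint form.
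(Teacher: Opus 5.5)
Your proof is correct, but it organizes the argument differently from the paper. The paper follows the Gilbarg--Trudinger Chapter~8 template. It writes $|\B|\le \B_0+\eps/\delta$ and proves boundedness together with a separate G{\aa}rding inequality for $\mathcal{L}$ and for the expanded adjoint form $\mathcal{L}_T$. That second inequality is where it uses $|\nabla\cdot\B|\le M^2\delta^{2\beta-2}$, to control $\int(\nabla\cdot\B)uv$ by Hardy. It then defers to the textbook Fredholm argument and leaves the required uniqueness implicit. For the adjoint, the textbook sign condition behind that uniqueness is precisely $\nabla\cdot\B\le 0$.

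You instead do three things differently:
\begin{itemize}
\item You bound the drift once by $MD^{\beta}\delta^{-1}$.
\item You prove G{\aa}rding only for $\mathcal{L}$, and get the adjoint for free. Its weak form is $\mathcal{L}(\varphi,v)$, so $L_1^T$ is the Banach adjoint of an index-zero Fredholm operator.
\item You make the uniqueness step explicit. This matters, because the textbook weak maximum principle assumes bounded lower-order coefficients and does not literally cover this drift.
\end{itemize}
Your route shows two things. First, solvability of $L_1^T$ in weak divergence form needs no sign condition once the maximum principle for $L_1$ holds. Second, under \cref{nonpositivee} the identity $\mathcal{L}(u,u)=\int\A\nabla u\cdot\nabla u-\frac12\int(\nabla\cdot\B)u^2\ge\lambda\|\nabla u\|_2^2$ makes both forms coercive, so Lax--Milgram alone gives the second claim. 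The paper's route gives explicit G{\aa}rding constants in terms of $M,\beta,\eps$ and stays closer to the textbook.

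There are two details to tighten.
\begin{itemize}
\item In Step 3, the fact $|\{u>k\}|\to 0$ only rules out $\sup u=\infty$. To finish, apply Hardy to $v=(u-k)^+\in W^{1,2}_0(K)$ to absorb the strip term, choosing the strip width so that $M\eps^{\beta}\sqrt{C_H}\le\lambda/2$. This gives $\|\nabla v\|_2\le C\|v\|_{L^2(\Gamma_k)}$ with $\Gamma_k=\{\nabla v\neq 0\}$. Sobolev then yields $|\Gamma_k|\ge c>0$ uniformly in $k$. Letting $k\uparrow\sup u$ contradicts $\nabla u=0$ a.e.\ on $\{u=\sup u\}$.
\item Hardy as stated gives the constant $\sqrt{C_H}$ rather than $C_H$. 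Also, your strip width is not the paper's $\eps$ from \cref{eps}. You simply choose it small enough that $M\eps^{\beta}\sqrt{C_H}\le\lambda/4$.
\end{itemize}
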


We refer the reader to Chapter 8 of \cite{Gt}. We are required to show the boundedness of the bilinear form as well as a modified argument in order to establish the coercivity. It is clear that for any $u, v \in W^{1,2}_0 (K)$, the boundedness of the bilinear forms corresponding to $L +\sigma I $ and $L_T  +\sigma I$ with bounds dependent on $\sigma$ is equivalent respectively to the boundedness of the bilinear forms corresponding to $L$ and $L_T$. This fact is used in the argument subsequent to Lemma 8.4 of \cite{Gt}.

The bilinear form corresponding to $L$ and $L_T$ are given respectively by 
\begin{equation}\label{forms}
    \mathcal{L}(u,v)=\int_{K} \partial_i u \partial_i v +\B_i (\partial_i u) v, \\
     \  \mathcal{L}_T(u,v)=\int_{K} \partial_i u \partial_i v - \B_i (\partial_i u) v -(\nabla\cdot \B)uv.
\end{equation}

First note that for any $\epsilon$ chosen sufficiently small, for all $x\in K$ with $\delta(x)\leq (\frac{\eps}{M})^{1/\beta}$, we have,
\begin{align*}
    \frac{M}{\delta(x)^{1-\beta}}\leq \frac{\eps}{\delta(x)}.
\end{align*}

For values of $\delta(x)> \delta_0 :=\big( \frac{\eps}{M} \big)^{1/\beta}$, the drift is bounded in magnitude crudely by 
\begin{align}
    \B_0 (\eps,M) := M\Big(\frac{1}{\delta_0}\Big)^{\frac{1}{\beta} -1}=\frac{M^{1/\beta}}{\eps^{1/\beta -1} },
\end{align}
in the interior of the domain $K_\eps:\{x\in K: \delta(x)> \big( \frac{\eps}{M} \big)^{1/\beta}\}$. Here, we have written $\B_0 (\eps/M)$ as a function of $M$, and we determine the small enough value of $\eps$ below.

In particular, for any point of the bounded domain $K$, we can add the two previous contributions, and consider that the drift is bounded crudely by 
\begin{align}
|\B|\leq \B_0 +\frac{\eps}{\delta(x)},
\end{align}
for a sufficiently small $\eps$ to be chosen later. Note also that the term $|\nabla\cdot \B|$ is also bounded in magnitude by $M/\delta(x)^{2-2\beta}$ by the condition of \cref{nonpositivee}. Thus we also have the condition that for any $x$ with $\delta(x)\leq \delta_0$, 
\begin{align}
    \frac{M^2}{\delta(x)^{2-2\beta}}\leq \frac{\eps^2}{\delta(x)^{2}}, 
\end{align}

and further that for $\delta(x)>\delta_0$ we can crudely bound the magnitude of the gradient $|\nabla\cdot \B|$ by 
\begin{align}
    \D_0 =\frac{M^{2}}{\delta_{0}^{2-2\beta}}
\end{align}

As before, we can consider that the total contribution to $|\nabla\cdot \B|$ is bounded by 
\begin{align}
|\nabla\cdot \B|\leq \D_0 +\eps^2/\delta(x)^{2}.
\end{align}

To see the boundedness of the two bilinear forms in \cref{forms}, we use the Hardy inequality in the domain $K$ (see for reference, \cite{Hof25, KLT11})
\begin{theorem}[Hardy inequality]\label{hardy}
    For any $u \in W^{1,2}_0(K)$, which is the closure in $W^{1,2}(K)$ of the space $C_{c}(K)$ of compactly supported functions in $K$, we have that \begin{align}
        \int_K \Big(\frac{u(x)}{\delta(x)}\Big)^{2}\leq C_H \int_K |\nabla u|^2
    \end{align}
\end{theorem}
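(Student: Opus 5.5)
The plan is to prove the inequality with the explicit constant $C_H=4$. The argument uses only the concavity of the distance function $\delta$ on the convex set $K$; the general-domain Hardy inequality of \cite{Hof25, KLT11} is not needed. By density it suffices to treat $u\in C^{\infty}_c(K)$. The passage to $W^{1,2}_0(K)$ then goes as follows: take $u_k\to u$ in $W^{1,2}$, pass to a subsequence converging a.e., and apply Fatou's lemma to the left-hand side. The right-hand side converges.

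\textbf{Key structural fact.} Since $K$ is convex, $\delta(x)=\inf_{H}\operatorname{dist}(x,H)$, where $H$ ranges over the supporting half-spaces of $K$. Each of these functions is affine on $K$, so $\delta$ is concave and $1$-Lipschitz on $K$. Moreover $|\nabla\delta|=1$ a.e., because $\delta$ is a distance function away from $\partial K$. Concavity gives $\Delta\delta\le 0$ in the sense of distributions.

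\textbf{Core computation.} Take the vector field $F=\nabla\delta/\delta$. Formally
\begin{equation*}
\nabla\cdot F=\frac{\Delta\delta}{\delta}-\frac{|\nabla\delta|^2}{\delta^2}\le -\frac{1}{\delta^2}.
\end{equation*}
Hence, for $u\in C^\infty_c(K)$,
\begin{equation*}
\int_K\frac{u^2}{\delta^2}\le-\int_K u^2\,\nabla\cdot F=\int_K \frac{2u\,\nabla u\cdot\nabla\delta}{\delta}\le 2\Big(\int_K\frac{u^2}{\delta^2}\Big)^{1/2}\Big(\int_K|\nabla u|^2\Big)^{1/2}.
\end{equation*}
The left-hand side is finite because $\delta$ is bounded below on $\operatorname{supp}u$. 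Dividing through gives $\int_K u^2/\delta^2\le 4\int_K|\nabla u|^2$.

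\textbf{Main obstacle: justification.} The difficulty is that $\delta$ is only Lipschitz, so $\Delta\delta$ is merely a nonpositive measure. I would mollify. On a neighborhood $U$ of $\operatorname{supp}u$ with $\overline U\subset K$, define $\delta_\eta=\delta*\rho_\eta$ for small $\eta$. Then $\delta_\eta$ is smooth and still concave, so $\Delta\delta_\eta\le0$ pointwise. Also $|\nabla\delta_\eta|\le1$, $\delta_\eta\to\delta$ uniformly on $\overline U$, and $\nabla\delta_\eta\to\nabla\delta$ a.e. Applying the integration by parts above to the smooth field $\nabla\delta_\eta/\delta_\eta$ gives
\begin{equation*}
\int_K u^2\frac{|\nabla\delta_\eta|^2}{\delta_\eta^2}\le \int_K\frac{2u\,\nabla u\cdot\nabla\delta_\eta}{\delta_\eta}.
\end{equation*}
The term containing $\Delta\delta_\eta$ has a favorable sign and is simply dropped. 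Both sides converge as $\eta\to0$ by dominated convergence, since everything is bounded on $\operatorname{supp} u$. In the limit, $|\nabla\delta|=1$ a.e. recovers the inequality displayed in the core computation, which closes the argument.
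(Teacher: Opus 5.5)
Your proof is correct, and it takes a genuinely different route from the paper. The paper does not prove this statement at all: it refers to the literature (\cite{Hof25, KLT11}) for a Hardy inequality on general domains, and leaves $C_H$ as an unspecified, domain-dependent constant. You instead use convexity in an essential way. Writing $\delta(x)=\inf_{\nu}\bigl(h_K(\nu)-x\cdot\nu\bigr)$ makes $\delta$ concave, hence $\Delta\delta\le 0$, and testing against the field $\nabla\delta/\delta$ gives $\int_K u^2/\delta^2\le 4\int_K|\nabla u|^2$. The regularization is sound: all you need is $\Delta\delta_\eta=(\Delta\delta)*\rho_\eta\le 0$ near $\operatorname{supp}u$, or you may take $U$ to be a superlevel set $\{\delta>c\}$, which is itself convex. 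The dominated-convergence limit and the Fatou extension to $W^{1,2}_0(K)$ are also fine. What your route buys is a self-contained argument with the explicit constant $C_H=4$, which is sharp for convex domains and independent of both $K$ and $n$; in particular, the choice of $\epsilon$ in \cref{eps} then depends only on $M$ and $\beta$. What the cited route buys is generality. For non-convex domains, such as the bi-Lipschitz images and non-convex settings the paper mentions as future directions, $\Delta\delta$ is positive near concave portions of the boundary and your sign condition fails. There one needs the general Hardy inequality with a geometry-dependent constant.
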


Writing the upper bound on the drift and the upper bound on the gradient of the drift in terms of the sum of the two terms expressed earlier, and noting the fact that $u,v \in W^{1,2}_0 (K)$, using Holder inequality with exponents $p=q=2$, the Hardy inequality as above, and a crude bound in terms of $\B_0, \D_0$, we get the boundedness for both the forms.

\begin{theorem}
    We have, for any $u,v\in W^{1,2}_0 (K)$, positive numbers $K_1, K_2$ so that,
    \begin{align}
      |\mathcal{L}(u,v)|\leq K_1||u||_{1,2}||v||_{1,2}, \\|\mathcal{L}_T(u,v)|\leq K_2||u||_{1,2}||v||_{1,2}
    \end{align}
\end{theorem}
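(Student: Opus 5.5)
The plan is to bound each term of the two bilinear forms in \cref{forms} separately. In each case we write $|\B|\leq \B_0+\eps/\delta(x)$ and $|\nabla\cdot\B|\leq \D_0+\eps^2/\delta(x)^2$, as established just above. The bounded parts are handled by Cauchy--Schwarz. The singular parts are handled by Cauchy--Schwarz followed by the Hardy inequality of \cref{hardy}. Throughout, the fact that $u,v\in W^{1,2}_0(K)$ is what permits the use of \cref{hardy}.

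\textbf{The form $\mathcal{L}$.} The principal term is straightforward: $\left|\int_K \partial_i u\,\partial_i v\right|\leq \|\nabla u\|_2\|\nabla v\|_2\leq \|u\|_{1,2}\|v\|_{1,2}$.

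For the drift term we split according to the bound on $|\B|$:
\begin{align*}
\left|\int_K \B_i(\partial_i u)v\right|\leq \B_0\int_K|\nabla u||v|+\eps\int_K |\nabla u|\frac{|v|}{\delta}.
\end{align*}
The first piece is at most $\B_0\|\nabla u\|_2\|v\|_2$. For the second, Cauchy--Schwarz and \cref{hardy} give
\begin{align*}
\eps\int_K |\nabla u|\frac{|v|}{\delta}\leq \eps\|\nabla u\|_2\left\|\frac{v}{\delta}\right\|_2\leq \eps C_H^{1/2}\|\nabla u\|_2\|\nabla v\|_2.
\end{align*}
Combining, we may take $K_1=1+\B_0+\eps C_H^{1/2}$.

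\textbf{The form $\mathcal{L}_T$.} The principal term and the drift term $\int_K \B_i(\partial_i u)v$ are handled exactly as for $\mathcal{L}$. The only new term is the zeroth order one, $\int_K (\nabla\cdot\B)uv$, which we split in the same way:
\begin{align*}
\left|\int_K(\nabla\cdot\B)uv\right|\leq \D_0\|u\|_2\|v\|_2+\eps^2\int_K\frac{|u|}{\delta}\frac{|v|}{\delta}.
\end{align*}
For the singular piece, Cauchy--Schwarz gives $\int_K\frac{|u|}{\delta}\frac{|v|}{\delta}\leq \left\|\frac{u}{\delta}\right\|_2\left\|\frac{v}{\delta}\right\|_2$. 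Applying \cref{hardy} to each factor bounds this by $C_H\|\nabla u\|_2\|\nabla v\|_2$. Hence we may take $K_2=1+\B_0+\D_0+\eps C_H^{1/2}+\eps^2C_H$.

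\textbf{Main obstacle.} The one delicate point is the zeroth order term in $\mathcal{L}_T$. There $(\nabla\cdot\B)uv$ carries a weight $\delta^{-2}$, so it cannot be controlled by the $L^2$ norms of $u$ and $v$ alone. The argument relies on applying \cref{hardy} to both $u$ and $v$ simultaneously, which requires $u,v\in W^{1,2}_0(K)$. Note also that $\eps$ enters only through the constants $K_1,K_2$, so for boundedness any $\eps$ works. The smallness condition \cref{eps} is needed later, for coercivity, not here.
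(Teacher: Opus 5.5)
Your proposal is correct and follows essentially the same route as the paper. Like the paper, you split $|\B|\le \B_0+\eps/\delta$ and $|\nabla\cdot\B|\le \D_0+\eps^2/\delta^2$, use Cauchy--Schwarz on the bounded parts, and apply the Hardy inequality to $v/\delta$ for the drift term and to both $u/\delta$ and $v/\delta$ for the divergence term. Your factor $\eps C_H^{1/2}$ in the drift bound is in fact the precise one (the paper writes $C_H\eps$), and you are right that the smallness of $\eps$ plays no role in boundedness.
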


\begin{proof} We have,
    \begin{align}
|\mathcal{L}(u,v)| &= \left| \int_K (\partial_i u)(\partial_i v) + \mathcal{B}_i(\partial_i u)\, v \right| \notag \\
&\leq \int_K |\nabla u| \cdot |\nabla v| + |\mathcal{B}||\nabla u| v
\end{align}

where $|\nabla u| = \left(\sum_i (\partial_i u)^2\right)^{\frac{1}{2}}$, $|\nabla v| = \left(\sum_i (\partial_i v)^2\right)^{\frac{1}{2}}$, and we use the Cauchy--Schwarz inequality above.

Further, we have the crude bound $|\mathcal{B}| \leq \mathcal{B}_0(\eps,M) + \dfrac{\varepsilon}{\delta(x)}$,
for a value of $\varepsilon$ small enough to be determined later.

Recall that $\mathcal{B}_0(\eps,M) = \dfrac{M^{\frac{1}{\beta}}}{\varepsilon^{1/\beta}-1}$.

Thus, we write,
\begin{align}
\mathcal{L}(u,v) &\leq \left(\int_K |\nabla u|^2\right)^{\frac{1}{2}}
\left(\int_K |\nabla v|^2\right)^{\frac{1}{2}}
+ \int_K \left(\mathcal{B}_0(\eps,M) + \frac{\varepsilon}{\delta(x)}\right)|\nabla u|\, v
\end{align}

so,
\begin{align}
|\mathcal{L}(u,v)| &\leq \|\nabla u\|_2 \|\nabla v\|_2
+ \mathcal{B}_0(\eps,M) \|\nabla u\|_2 \|v\|_2 \notag \\
&\quad + \varepsilon \left(\int_K \left(\frac{v(x)}{\delta(x)}\right)^2\right)^{\frac{1}{2}}
\left(\int_K |\nabla u|^2 \right)^{\frac{1}{2}}
\end{align}

so,
\begin{align}
|\mathcal{L}(u,v)| &\leq \|\nabla u\|_2 \|\nabla v\|_2
+ \mathcal{B}_0(\varepsilon, M)\|\nabla u\|_2 \|v\|_2
+ C_{H}\varepsilon \|\nabla v\|_2 \|\nabla u\|_2
\end{align}
where in the last step we have used the Hardy inequality.

This proves the boundedness of the form $\mathcal{L}(u,v)$.

Now, consider the form $\mathcal{L}_T(u,v)$. We have,
\begin{align}\label{eq23}
|\mathcal{L}_T(u,v)| &= \left| \int_K \partial_i u\, \partial_i v
- \mathcal{B}_i(\partial_i u)\,v - (\nabla \cdot \mathcal{B})\, uv \right|
\end{align}
The first two terms are dealt with as above, and it remains to
bound the last term involving $(\nabla \cdot \mathcal{B})$.

So, we have
\begin{align}
|\mathcal{L}_T(u,v)| &\leq (1+C_H \eps)\|\nabla v\|_2 \|\nabla u\|_2
+ \mathcal{B}(\varepsilon,M)\|\nabla u\|_2 \|v\|_2
+ \int_K |\nabla \cdot \mathcal{B}|\, |u|\, |v|
\end{align}

Again, we use the crude bound on $|\nabla \cdot \mathcal{B}|$, to get,
\begin{align}
|\mathcal{L}_T(u,v)| &\leq (1+C_H \eps)\|\nabla v\|_2 \|\nabla u\|_2
+ \mathcal{B}(\varepsilon,M)\|\nabla u\|_2 \|v\|_2 \notag \\
&\quad + \mathcal{D}_0 \int_K |u||v|
+ \varepsilon^2 \int_K \frac{|u||v|}{\delta(x)^2}
\end{align}

so,
\begin{align}
|\mathcal{L}_T(u,v)| &\leq (1+C_H \eps)\|\nabla v\|_2 \|\nabla u\|_2
+ \mathcal{D}_0 \|u\|_2 \|v\|_2 \notag +\mathcal{B}(\varepsilon,M)\|\nabla u\|_2 \|v\|_2 \\
&\quad + \varepsilon^2
\left(\int_K \left(\frac{u}{\delta(x)}\right)^2\right)^{\frac{1}{2}}
\left(\int_K \left(\frac{v}{\delta(x)}\right)^2\right)^{\frac{1}{2}}
\end{align}

Using Hardy inequality again, we get,
\begin{align}
|\mathcal{L}_T(u,v)| &\leq (1+C_H \eps) \|\nabla v\|_2 \|\nabla u\|_2
+ \mathcal{D}_0 \|u\|_2 \|v\|_2 \notag \\
&\quad + C_H \varepsilon^2 \|\nabla u\|_2 \|\nabla v\|_2
+ \mathcal{B}(\varepsilon,M)\|\nabla u\|_2 \|v\|_2
\end{align}
\end{proof}

In order to show the analoguos result for coercivity, it is enough to prove the version of Lemma 8.4 of \cite{Gt} for both the bilinear forms $\mathcal{L}, \mathcal{L}_T$.

\begin{theorem}\label{thm5}

We have, for a value of $\eps$ chosen from \cref{eps}, we have, 
\begin{align}   \mathcal{L}(u,u)\geq \frac{1}{2} \int_K |\nabla u|^2 -\Bigg( \frac{M^{\frac{1}{\beta}}}{\eps^{\frac{2}{\beta}-1}} \Bigg)  \int_{K} u^2 ,\\   \mathcal{L}_T(u,u)\geq \frac{1}{2} \int_K |\nabla u|^2 -\Big(  \frac{M^{\frac{1}{\beta}}}{\eps^{\frac{2}{\beta}-1}} + \frac{M^{\frac{2}{\beta}}}{{\eps^{\frac{2}{\beta}-2}}}  \Big)  \int_{K} u^2
\end{align}
\end{theorem}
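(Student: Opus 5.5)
We set $v=u$ in the forms of \cref{forms} and split the drift and its divergence exactly as in the boundedness argument: $|\B|\leq \B_0(\eps,M)+\eps/\delta(x)$ and $|\nabla\cdot\B|\leq \D_0+\eps^2/\delta(x)^2$. Both bilinear forms then reduce to $\int_K|\nabla u|^2$ minus lower order terms. The singular parts of these terms will be absorbed using the Hardy inequality of \cref{hardy}, and the bounded parts will be absorbed using Young's inequality at a cost proportional to $\int_K u^2$. The choice of $\eps$ in \cref{eps} is designed so that after absorption at least $\frac12\int_K|\nabla u|^2$ survives.

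For $\mathcal{L}(u,u)$ we write
\begin{align*}
\mathcal{L}(u,u)\geq \int_K|\nabla u|^2-\B_0\int_K|\nabla u||u|-\eps\int_K\frac{|u|}{\delta}|\nabla u|.
\end{align*}
The last term is handled by Cauchy--Schwarz and \cref{hardy}; it is at most $C_H\eps\|\nabla u\|_2^2$, keeping the paper's convention of writing $C_H$ for the constant that appears after taking square roots.

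The middle term is handled by Young's inequality with a weight tied to $\eps$:
\begin{align*}
\B_0|\nabla u||u|\leq \tfrac{\eta}{2}|\nabla u|^2+\tfrac{\B_0^2}{2\eta}u^2 .
\end{align*}
To match the stated constants we should choose $\eta$ so that the gradient coefficient becomes $M^{1/\beta}\eps$ and the $u^2$ coefficient becomes $M^{1/\beta}\eps^{1-2/\beta}$. Since $\B_0=M^{1/\beta}\eps^{1-1/\beta}$, the choice $\eta\sim\eps$ does this: it gives gradient coefficient of order $M^{1/\beta}\eps$ and $u^2$ coefficient of order $\B_0^2/\eps = M^{2/\beta}\eps^{1-2/\beta}$.

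The power of $M$ in the $u^2$ term will be checked carefully, and if needed the split will be rebalanced, for instance as $\B_0 ab\leq \frac{M^{1/\beta}\eps}{2}a^2+\frac{M^{1/\beta}\eps^{1-2/\beta}}{2}b^2$. This works because $\B_0^2=(M^{1/\beta}\eps)(M^{1/\beta}\eps^{1-2/\beta})$, which is exactly the product AM--GM requires, and it yields precisely the stated constant. Collecting terms gives the coefficient $1-(M^{1/\beta}+C_H)\eps$ in front of $\int_K|\nabla u|^2$, which exceeds $\frac12$ by \cref{eps}.

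For $\mathcal{L}_T(u,u)$ the drift term is treated identically. The extra term $-\int_K(\nabla\cdot\B)u^2$ is in fact nonnegative by \cref{nonpositivee}, so it could simply be dropped. To match the stated inequality, however, we instead bound it crudely:
\begin{align*}
\Big|\int_K(\nabla\cdot\B)u^2\Big|\leq \D_0\int_K u^2+\eps^2\int_K\frac{u^2}{\delta^2}.
\end{align*}
Applying \cref{hardy} to the second piece gives at most $\D_0\int_K u^2 + C_H^2\eps^2\|\nabla u\|_2^2$, again absorbing the Hardy constant in the squared form that appears in \cref{eps}. Here $\D_0=M^2\delta_0^{2\beta-2}=M^{2/\beta}\eps^{2-2/\beta}$, which is the second constant in the statement. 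The gradient coefficient becomes $1-(M^{1/\beta}+C_H)\eps-C_H^2\eps^2>\frac12$, again by \cref{eps}.

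The main obstacle is purely bookkeeping. The Young split must be arranged so that the gradient contribution carries a factor $\eps$, so it can be absorbed via \cref{eps}, while the $u^2$ constant comes out in the stated form. In addition, the Hardy constant must be tracked consistently (as $C_H$ versus $\sqrt{C_H}$); we follow the paper's convention and treat $C_H$ as whatever constant is produced after Cauchy--Schwarz.
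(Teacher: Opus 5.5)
Your proposal is correct and follows the paper's proof. Both use the same splitting $|\B|\le \B_0+\eps/\delta$ and $|\nabla\cdot\B|\le \D_0+\eps^2/\delta^2$, Hardy on the singular pieces, and the same balanced Young split $\B_0\|\nabla u\|_2\|u\|_2\le \frac{M^{1/\beta}\eps}{2}\|\nabla u\|_2^2+\frac{M^{1/\beta}}{2\eps^{2/\beta-1}}\|u\|_2^2$. Your first choice $\eta\sim\eps$ only reproduces the stated constant if $\eta=M^{1/\beta}\eps$, so keep the rebalanced version. Your remark that $-\int_K(\nabla\cdot\B)u^2\ge 0$ could simply be dropped is also valid and gives a slightly stronger bound than the paper's crude estimate.
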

\begin{proof}
We have,
\begin{align}
\mathcal{L}(u,u) &= \int_K \partial_i u \partial_i u + \mathcal{B}_i(\partial_i u)u
= \|\nabla u\|_2^2 + \int_K \mathcal{B}_i(\partial_i u)u.
\end{align}

Now as before, we have,
\begin{align}
\left|\int_K \mathcal{B}_i(\partial_i u)u\right|
&\leq \mathcal{B}_0(\eps,M) \|\nabla u\|_2 \|u\|_2 + C_H \varepsilon \|\nabla u\|_2^2 \notag \\
&= \frac{M^{1/\beta}}{\varepsilon^{1/\beta -1}} \|\nabla u\|_2 \|u\|_2 + C_H \varepsilon \|\nabla u\|_2^2
\end{align}

so,
\begin{align}
\left|\int_K \mathcal{B}_i(\partial_i u)u\right|
&\leq \frac{M^{1/\beta}}{2(\varepsilon^{1/\beta -1})}
\left(\varepsilon^{1/\beta}\|\nabla u\|_2^2 + \frac{1}{\varepsilon^{1/\beta}}\|u\|_2^2\right)
+ C_H \varepsilon\|\nabla u\|_2^2 \notag \\
&= M^{1/\beta}\frac{\varepsilon}{2} \|\nabla u\|_2^2 + C_H \varepsilon\|\nabla u\|_2^2
+ \frac{M^{1/\beta}}{2\varepsilon^{2/\beta -1}}\|u\|_2^2
\end{align}

Note that $M, \beta$ are the constants determining the drift, and the constant $C$ arises as a universal constant in Hardy's inequality.

Thus, we get from above that
\begin{align}
\mathcal{L}(u,u) \geq \|\nabla u\|_2^2 - M_1\varepsilon\|\nabla u\|_2^2
- \frac{M^{1/\beta}}{2\varepsilon^{2/\beta -1}}\|u\|_2^2,
\end{align}
where we have written $M_1 = (\frac{1}{2}M^{1/\beta} + C_H)$, and here it is enough to choose $\varepsilon$
small enough so that
\begin{align}\label{eqthree}
(1 - M_1\varepsilon) > \frac{1}{2}.
\end{align}

Finally, we deal with $\mathcal{L}_T(u,u) = \int_K \partial_i u\, \partial_i u
- \mathcal{B}_i(\partial_i u)u - (\nabla\cdot\mathcal{B})u^2$.

As above, we have
\begin{align}
\left|\int_K \mathcal{B}_i(\partial_i u)u\right|
\leq M_1\varepsilon\|\nabla u\|_2^2 + \frac{M^{1/\beta}}{2\varepsilon^{2/\beta -1}}\|u\|_2^2.
\end{align}

Also, we have,
\begin{align}
\left|\int_K (\nabla\cdot\mathcal{B})u^2\right|
&\leq \int_K |\nabla\cdot\mathcal{B}|\,|u^2|
\leq \mathcal{D}_0\|u\|_2^2 + C_H \varepsilon^2\|\nabla u\|_2^2,
\end{align}
from the calculation following \cref{eq23}. Thus we have,
\begin{align}
\mathcal{L}_T(u,u) \geq \|\nabla u\|_2^2 - M_1\varepsilon\|\nabla u\|_2^2
- C_H \varepsilon^2\|\nabla u\|_2^2 - \mathcal{D}_0\|u\|_2^2
- \frac{M^{1/\beta}}{2\varepsilon^{1/\beta -1}}\|u\|_2^2
\end{align}
In this calculation, it is enough to choose $\varepsilon$ small enough 
\begin{align}\label{eq377}
(1 - M_1\varepsilon - C_H \varepsilon^2) > \frac{1}{2},
\end{align} 
thus satisfying \cref{eps}. We then also automatically satisfy the requirement of \cref{eqthree}, and thus it is enough to require \cref{eq377} to hold.

Thus, we get,
\begin{align}
\mathcal{L}(u,u) &\geq \frac{1}{2}\|\nabla u\|_2^2 - \frac{M^{1/\beta}}{\varepsilon^{2/\beta -1}}\|u\|_2^2,
\end{align}
\begin{align}
\mathcal{L}_T(u,u) &\geq \frac{1}{2}\|\nabla u\|_2^2 - \frac{M^2}{\delta_0^{2-2\beta}}\|u\|_2^2 - \frac{M^{1/\beta}}{2\varepsilon^{1/\beta -1}}\|u\|_2^2
\end{align}

Recalling that $\delta_0 = \left(\dfrac{\varepsilon}{M}\right)^{1/\beta}$, we get
\begin{align}
\mathcal{L}_T(u,u) &\geq \frac{1}{2}\|\nabla u\|_2^2
- M^2 \left(\frac{M}{\varepsilon}\right)^{(2-2\beta)/\beta} \|u\|_2^2
- \frac{M^{1/\beta}}{\varepsilon^{2/\beta-1}}\|u\|_2^2 \notag \\
&= \frac{1}{2}\|\nabla u\|_2^2
- \left(\frac{M^{2/\beta}}{\varepsilon^{2/\beta-2}} + \frac{M^{1/\beta}}{2\varepsilon^{2/\beta-1}}\right)\|u\|_2^2
\end{align}

\end{proof}

After this result is established the rest of the argument, using the Fredholm alternative, is identical to the argument presented after Theorem 8.4 of \cite{Gt}.
\section{Preliminaries}
 Note first that the maximum principle is applicable in this setting as in \cite{Pat25}. Recall that when we define, $f(s)=|\{x\in \Omega: G(x,0)\geq s\}|^{\frac{n-2}{n}}$, and we have the following two cases in \cite{Pat25}; 
 \begin{itemize}
    \item[(i)] There exists a large enough absolute constant $C$ so that for any configuration of the drift in the ball $B(0,1)$, we have $f(2s)\geq \frac{1}{C}f(s)$ for all $ s\geq \widetilde{s}:=\max\limits_{x\in S(0,\frac{1}{2})}\G(x,0) $.
    \item[(ii)] The above fails, and so for all positive integers $N$ large enough, we have some drift configuration $|\B_N|$ for which we have $f(s_N)>N f(2s_N)$ for some $s_N\geq \widetilde{s_N}$.  Here we define, $\widetilde{s_N}:=\max\limits_{x\in S(0,\frac{1}{2})}\G_N(x,0) $, where $\G_N(x,0)$ is the Green's function corresponding to the operator with the drift $|\B_N|$. We deal with this case later, in a manner similar to the argument used in the later part of the argument for Case $(i)$.
\end{itemize}
\bigskip

In Case 1, using an interpolation argument, we are reduced to showing that one cannot have a sequence of drifts $B_i$ for which one has for the corresponding Green's function,
\begin{align}\frac{|\{x : G_i(x, 0) \ge 2\alpha_i^*\}|}{|B(0, \frac{1}{L})|} \to 0, \text{ as } i \to \infty.\end{align}

We call $y_{i,\text{min}}$ the minimum distance $\delta_{\text{min},i}$ from the origin to the level set where $G_i (x,0)=2\alpha^{*}_i$.

Here $\alpha^{*}_i$ is the maximum value of the Green function $G_i(x,0)$ corresponding to the drift $\B_i$, on the level set $B(0,1/L)$. Here $L$ is chosen large enough so that
\begin{align}\label{eq10}
B(0,1/L)\subset B_1:=B(0,\frac{1}{2}R).
\end{align}

\subsection{Points of tangency of level sets with $\partial K_y$}
Consider the $i$ value large enough, and the point $y_{i, \min}$ on the boundary $\partial \Omega_{2\alpha_i^*}\subset B(0,1/L)$, that is a minimum distance $\delta_{\min, i}$ from the origin. As a consequence of the above, for the fixed $L$, we have $\delta_{\min, i} \to 0$ as $i \to \infty$. We have $B(0, \delta_{\min, i}) \subset \Omega_{2\alpha_i^*}$. By Harnack inequality, there is a constant $\beta_1 >1$, independent of $r$ so that for all points on the sphere $S(0,r)$, the values of the Green function are comparable;
\begin{equation}
    \text{sup}_{S(0,r)}G_i(\cdot)\leq \beta_1 \ \text{inf}_{S(0,r)}G_i(\cdot).
\end{equation}

For any point on the $C^1$ boundary $\partial K_y$, for any $0 \le y \le 1$, for the maximum value of the Green's function on $\partial K_y$, attained at $z_y \in \partial K_y$, one uses the fact that $\partial K_y$ is convex with respect to the outward unit normal, and thus if one chooses local Cartesian coordinates with one coordinate $x_1$ along the outward unit normal to $\partial K_y$ at $z_y$, and other coordinates $(x_2, x_3, \dots, x_n)$, we see that along any coordinate direction $x_i$ with $2 \le i \le n$,
\[
G(z_y + \hat{x}_i h) - G(z_y) = \frac{\partial G}{\partial x_i}(z_y)h + \left( \frac{\partial^2 G}{\partial x_i^2}(z_y + \hat{x}_i \theta h) \right) \frac{h^2}{2}
\]
Here $0 \le \theta \le 1$.

By definition, at this maximum point we have,
\begin{align}\frac{\partial G}{\partial x_i}(z_y) = 0, \ \ \text{for all} \ \ 2 \le i \le n,
\end{align}
 since the gradient points along the unit normal at the point of tangency.

Thus, we have $G(z_y + \hat{x}_i h) - G(z_y) = \frac{\partial^2 G}{\partial x_i^2}(z_y) \frac{h^2}{2} + O(h^{2+\alpha})$.

By definition we have $G(z_y + \hat{x}_i h) - G(z_y) \le 0$, so if $\frac{\partial^2 G}{\partial x_i^2}(z_y + \hat{x}_i h) > 0$ then for $h$ small enough depending on the implicit parameters, we get that $$\frac{\partial^2 G}{\partial x_i^2}(z_y + \hat{x}_i h) \frac{h^2}{2} + O(h^{2+\alpha}) > 0,$$ which is a contradiction. Thus we have that for each $2 \le i \le n$, $\frac{\partial^2 G}{\partial x_i^2}(z_y) \le 0$.

We have the pointwise condition that, $$\sum_{i=1}^{n} \frac{\partial^2 G}{\partial x_i^2}(z_y) + \vec{B}(z_y) \cdot \frac{\partial G}{\partial x_i}(z_y) = 0.$$

Thus we must have at each such maximum point
\[
\frac{\partial^2 G}{\partial x_1^2}(z_y) + B(z_y) \frac{\partial G}{\partial x_1}(z_y) \ge 0.
\]

Now consider any maximum point on the sphere $S(0, r)$, $r < \frac{R_j}{2}$. In that case, we have, by an argument similar to the above, that in polar coordinates in $\mathbb{R}^n$ with $n \ge 3$,
\begin{equation}
0 \ge G(z_y + \hat{\theta}_i h_{\theta_i}) - G(z_y) = \frac{\partial G}{\partial \theta_i}(z_y)h_{\theta_i} + \frac{\partial^2 G}{\partial \theta_i^2}(z_y + \tilde{\theta}_i h_{\theta_i}) \frac{h_{\theta_i}^2}{2} \quad 
\end{equation}
Here $\tilde{\theta} \le 1$. Again we have that the gradient points along the unit normal vector, and so $\frac{\partial G}{\partial \theta_i} = 0$. Further from homogeneity alone, one sees that,

\begin{equation}\frac{\partial^2 G}{\partial \theta_i^2} = \sum_{m,n} f_{m,n}^{(i)}(\vec{x}) \frac{\partial^2 G}{\partial x_m \partial x_n} + \sum_{m} g_m^{(i)}(\vec{x}) \frac{\partial G}{\partial x_m},\end{equation} where $f_{m,n}^{(i)}$ are homogeneous and quadratic, whereas $g_m^{(i)}$ is linear in the $x_i$ coordinates. Thus it is easy to see that the $C^\alpha$ regularity of the double derivatives of $G$ in terms of Cartesian coordinates, means the $\frac{\partial^2 G}{\partial \theta_i^2}$ are also $C^\alpha$ in terms of the angular coordinates, with implied constants depending on the domain $K_\kappa = K_{y_j} \setminus B(0, b_{\kappa_0})$.

Thus we have that
$$0 \ge \frac{\partial^2 G}{\partial \theta_i^2}(z_y) \frac{h_{\theta_i}^2}{2} + O(h_{\theta_i}^3).$$

Note that $h_{\theta_i}$ is an angular variable. Thus again we must have $\frac{\partial^2 G}{\partial \theta_i^2} \le 0$ for all $1 \le i \le n-1$, otherwise we have a contradiction as before. From the structure of the Laplacian in spherical coordinates in dimensions $n \ge 3$ (see for example, Section 2 of \cite{Sm19}, in particular Eq 2.12), this implies that
$$\frac{\partial^2 G}{\partial r^2} + \frac{n-1}{r} \frac{\partial G}{\partial r} + B(r) \frac{\partial G}{\partial r} \ge 0 \quad \text{at } z_y \in \partial K_y \text{ (where } \max_{S(0,r)} G(\cdot) = G(z_y))$$

By essentially an identical argument as before, we get that for any minimum point $z_y$ on the sphere, we have
$$\frac{\partial^2 G}{\partial r^2}(z_y) + \frac{n-1}{r} \frac{\partial G}{\partial r}(z_y) + B(r) \frac{\partial G}{\partial r}(z_y) \le 0.$$
\subsection{Sequences of points converging to pole with control on gradients.}
We recall the following result from \cite{Pat25}.

\begin{theorem}\label{lemma4} There is a uniform constant $M_0$ so that for any configuration of the drift that satisfies the ambient conditions, we have a sequence of maximum points, and a sequence of minimum points $b_k|_{k=1}^\infty$ so that
\begin{equation}
\left| \frac{\partial G}{\partial x} \right|_{b_k} b_k^{n-1} < M_0 \left| \frac{\partial G}{\partial r} \right|_{a_k} a_k^{n-1}, \quad \text{and } a_k \to 0, b_k \to 0
\end{equation}
as $k \to \infty$.
\end{theorem}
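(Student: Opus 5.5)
The plan is to compare the Green's function with radial quantities near the pole, using only the flux identity and the differential inequalities at extremal points on spheres from the previous subsection. Write $G=\G(\cdot,0)$. The starting point is that the total flux of $G$ through small spheres is normalized. Integrating the adjoint relation against test functions gives, in the limit $r\to 0$,
\begin{equation*}
\int_{S(0,r)} \partial_r G \, d\sigma \to -1 ,
\end{equation*}
because the drift is bounded near the origin and so does not affect the leading singularity. It follows that near the pole $|\nabla G|\,r^{n-1}$ is comparable to a fixed constant $c_n$ on average over $S(0,r)$. Combining this with the Harnack comparability $\sup_{S(0,r)}G\le\beta_1\inf_{S(0,r)}G$, and with interior Schauder estimates applied to the rescaled function $G(r\,\cdot)$ on an annulus, we get that at every point of $S(0,r)$ the quantity $|\nabla G|\,r^{n-1}$ is bounded above and below by constants. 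This holds for $r$ below a threshold that depends only on the local $C^{0,\alpha}$ norm of $\B$ near $0$.

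The two sequences are then produced as follows.

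\textbf{Sequence of minimum points.} Take $a_k=2^{-k}$ and let $a_k$ also denote a minimum point of $G$ on $S(0,2^{-k})$. At such a point the gradient is normal to the sphere, so $|\partial_r G|_{a_k}$ equals the full gradient there.

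\textbf{Sequence of maximum points.} Let $b_k$ be a maximum point of $G$ on $S(0,2^{-k})$.

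\textbf{Comparison.} The target inequality is $|\nabla G|_{b_k}\,b_k^{n-1}<M_0\,|\partial_rG|_{a_k}\,a_k^{n-1}$. Both sides are comparable to $c_n$ by the first paragraph, which gives a constant $M_0$. To keep $M_0$ independent of the drift, I would rescale $G(2^{-k}x)$ on the annulus $1/2<|x|<2$. After rescaling the drift becomes $2^{-k}\B(2^{-k}x)$, which is small as $k\to\infty$. The limiting equation is the Laplacian, whose Green's function $c|x|^{2-n}$ has $|\nabla G|\,|x|^{n-1}$ exactly constant. Compactness therefore yields a uniform ratio, for instance $M_0=2$, once $k$ is large.

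As a fallback that avoids limits, one could use the radial inequalities from the previous subsection. At maximum points, $\partial_r^2G+\big(\tfrac{n-1}{r}+B\big)\partial_rG\ge0$, and the reverse inequality holds at minimum points. These can be integrated as a comparison for the ODE satisfied by $\max_{S(0,r)}G$ and $\min_{S(0,r)}G$, which are monotone barriers, controlling their log-derivatives by $r^{1-n}$ times the flux.

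\textbf{Main obstacle.} The hardest step is uniformity of $M_0$ over all admissible drifts. Near the origin $\B$ is bounded only by $M\delta^{\beta-1}$, and $\delta(0)\ge R$, so the rescaled drift has size at most $2^{-k}MR^{\beta-1}$, uniformly in the configuration. However, the $C^{0,\alpha}$ norm of $\B$ is not uniformly controlled. I would therefore avoid Schauder estimates and use only $W^{2,p}$ or $C^{1,\alpha}$ gradient estimates for $-\Delta u+\B\cdot\nabla u=0$ with bounded $\B$. These depend only on $\|\B\|_\infty$ and give uniform control of $|\nabla G|$ on the rescaled annulus. The flux normalization then supplies the lower bound needed at the minimum points.
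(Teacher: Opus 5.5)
The paper gives no proof of this statement; it is imported from the earlier unit-ball paper. The companion \cref{lemma5} is proved by a level-set Taylor iteration combined with \cref{thm1}. Your route is different and viable. Since $\delta\ge R/2$ on $B(0,R/2)$, the drift is bounded near the pole, and a blow-up there shows that, for each fixed admissible drift, $|\nabla G|\,r^{n-1}\to 1/|S^{n-1}|$ uniformly on $S(0,r)$. This gives the statement with $a_k,b_k$ on the same sphere and any $M_0>1$, and it also gives \cref{lemma5}. As written, however, the steps that actually carry the argument are either missing or incorrect.

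First, the pointwise bounds in your first paragraph do not follow. The flux controls only the spherical average of $\partial_rG$, and Harnack applies to $G$, not to $|\nabla G|$. So nothing you cite prevents $|\partial_rG|$ from being small at the minimum point, which is exactly where the lower bound is needed; your closing sentence repeats this error.

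Second, in the blow-up you must identify the limit. Compactness on the single annulus $\{1/2<|x|<2\}$ only yields some harmonic function there, whose gradient need not have constant modulus. Here is a way to do it:
\begin{itemize}
\item Take $v_k(x)=r_k^{n-2}G(r_kx)$.
\item For a fixed drift, $G(x)\le C_{\B}|x|^{2-n}$ near $0$ (compare with the Green's function of a small ball $B_\rho$ plus $\max_{\partial B_\rho}G$). This bounds $v_k$ on compact subsets of $\R^n\setminus\{0\}$.
\item Interior estimates with drift $r_k\B(r_k\cdot)\to0$ give a $C^1_{\mathrm{loc}}$ subsequential limit, which is a nonnegative harmonic function on $\R^n\setminus\{0\}$.
\item By B\^ocher and Liouville this limit is $a\Gamma+b$, and the flux forces $a=1$.
\end{itemize}
Alternatively, write $-\Delta G=\delta_0-\B\cdot\nabla G$ near $0$. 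Since $|\B\cdot\nabla G|\lesssim|x|^{1-n}$ and $n\ge3$, the gradient of its Newtonian potential is $O(|x|^{2-n})$, so $\nabla G=\nabla\Gamma+O(|x|^{2-n})$.

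Third, your ``main obstacle'' is misidentified, and your claims about it are false. Only $M_0$ must be uniform; the sequences may depend on the drift. The scale at which the asymptotics take over is set by the regular part of $G$ at the pole, i.e.\ by $\max_{S(0,\rho)}G$. That quantity depends on the whole drift, not on a local ($C^{0,\alpha}$ or $L^\infty$) norm of $\B$. Interior $C^{1,\alpha}$ estimates bound $|\nabla v_k|$ only through the size of $v_k$. Drift-uniform control would therefore need a uniform bound on $G$ on a fixed sphere, which is essentially \cref{thm2} itself. Drop those claims, since they are not needed. The ODE fallback is too vague to compare a maximum point with a minimum point and can be omitted.
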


In the above equation, it is understood that the pole of the Green's function is at 0.

We need a variant of this, which we establish below with the help of \cref{thm1}.

\begin{theorem}\label{lemma5} There is a uniform constant $C_0$, so that for any configuration of the drift that satisfies the ambient conditions, we have a sequence $\bar{a}_k|_{k=1}^\infty$ (possibly distinct from the sequence of minimum points of \cref{lemma4}) of minimum points for which we have,
\begin{equation}
\left| \frac{\partial G}{\partial r} \right|_{\bar{a}_k} \bar{a}_k^{n-1} \ge C_0.
\end{equation}
\end{theorem}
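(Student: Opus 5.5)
We prove \cref{lemma5} by combining the lower bound of \cref{thm1} with a flux identity for $\G(\cdot,0)$ on small spheres about the pole. The plan is to show that the total flux through $S(0,r)$ is bounded below uniformly, and then pass from the flux to the radial derivative at a minimum point using Harnack's inequality.

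\textbf{Step 1 (flux is bounded below).} Fix $r$ small and consider $\G(\cdot,0)$ on $K\setminus B(0,r)$. Since $\G(x,0)=\G_T(0,x)$ as a function of $x$ solves the adjoint equation $-\nabla\cdot(\nabla u+\B u)=0$ away from the pole, the quantity
\[
\Phi(r)=-\int_{S(0,r)}\Big(\frac{\partial \G}{\partial r}+\B\cdot\hat r\,\G\Big)\,d\sigma
\]
is formally independent of $r$. Testing the defining identity for the Green's function against a cutoff equal to $1$ on $B(0,r)$ shows that $\Phi(r)=1$. Near the pole the drift is bounded by a constant depending on $M,R,\beta$. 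Moreover, on $S(0,r)$ the Harnack inequality together with \cref{thm1} gives $\G\approx r^{2-n}$ at least from below, while an a priori upper bound of the same order for the average would give
\[
\int_{S(0,r)}|\B|\,\G\lesssim r^{n-1}\,r^{2-n}=r\to 0 .
\]
Hence, for $r$ small,
\[
\int_{S(0,r)}\Big|\frac{\partial\G}{\partial r}\Big|\,d\sigma\geq \tfrac12 .
\]
Here I expect the obstacle: an upper bound on $\G$ on $S(0,r)$ is essentially what \cref{thm2} proves. To avoid circularity I would instead work with the sign structure. Integrate $\frac{\partial}{\partial r}$ of the spherical mean $m(r)$ of $\G$, and use \cref{thm1} to show that $m(r)\geq T r^{2-n}$ forces $-m'(\rho)\gtrsim \rho^{1-n}$ on a set of radii $\rho$ of positive logarithmic density. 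This uses only the lower bound and the fact that $m(r)\to 0$ at a fixed positive radius, up to a controlled correction, which gives the sequence $\bar a_k$. Concretely, if $-m'(\rho)\rho^{n-1}<c$ for all $\rho\in(r,2r)$, then integrating would give $m(r)-m(2r)<c' r^{2-n}$. Iterating dyadically and comparing with the lower bound $T r^{2-n}$ on the scale $r$ against the upper Harnack comparison between consecutive spheres gives a contradiction for $c$ small.

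\textbf{Step 2 (from the average to a minimum point).} Let $\bar a_k$ be a radius produced by Step 1 and take the minimum point of $\G$ on $S(0,\bar a_k)$. By \cref{lemma4}, gradients at maximum points are controlled by gradients at minimum points along sequences. Combining this with the Harnack comparability $\sup_{S}\G\le\beta_1\inf_S\G$ and interior gradient estimates, which are applicable since $\B$ is bounded near the origin, the radial derivative on $S(0,\bar a_k)$ is comparable across the sphere up to constants depending on $M,\lambda$. Hence
\[
\Big|\frac{\partial\G}{\partial r}\Big|_{\bar a_k}\,\bar a_k^{\,n-1}\ge C_0 .
\]
The comparability of radial derivatives across the sphere is the second delicate point. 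I would obtain it by rescaling $B(0,2\bar a_k)\setminus B(0,\bar a_k/2)$ to unit size, where the rescaled drift has size $\bar a_k|\B|\to 0$, and appealing to $C^{1,\alpha}$ compactness.
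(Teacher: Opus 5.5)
\textbf{The gap is in Step 2.} Your spherical-mean version of Step 1 is essentially correct, provided you integrate over the whole range $(\varepsilon,r_0)$ with $r_0$ fixed. The single dyadic scale $(r,2r)$ gives no contradiction by itself, and the ``positive logarithmic density'' claim is neither proved nor needed. If $-m'(\rho)\rho^{n-1}<c$ on $(\varepsilon,r_0)$, then $m(\varepsilon)-m(r_0)<\frac{c}{n-2}\varepsilon^{2-n}$. This contradicts $m(\varepsilon)\ge T\varepsilon^{2-n}$ from \cref{thm1} once $c<(n-2)T$ and $\varepsilon$ is small.

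That argument, however, only gives radii where the spherical \emph{average} of $-\partial_r G$ is $\gtrsim\rho^{1-n}$. The lemma asks for $\partial_r G$ at a \emph{minimum point} of $G$ on $S(0,\rho)$, and Step 2 does not bridge the two.
\begin{itemize}
\item Harnack's inequality compares values of $G$, not derivatives.
\item Interior estimates give only the upper bound $|\nabla G|\lesssim G/\rho$.
\item \cref{lemma4} is about its own particular sequences $a_k,b_k$ and says nothing at the radii Step 1 produces.
\item Rescaling the annulus and taking a $C^{1,\alpha}$ limit yields only a positive harmonic function on $\{1/2<|x|<2\}$. Such functions need not have comparable radial derivatives. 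For $n=3$, take $h=|x|^{-1}+\tfrac12 x_1|x|^{-3}$. It is positive there, and the mean of $\partial_r h$ over $S(0,1)$ is $-1$. Yet at the minimum point $-e_1$ of $h$ on $S(0,1)$ one has $\partial_r h=0$.
\end{itemize}
To rescue the rescaling you would have to keep the pole in the picture and normalize by $\rho^{2-n}$, which needs at least a qualitative bound $G(x,0)\lesssim|x|^{2-n}$ near $0$. You would then also have to identify the limit as the fundamental solution plus a constant. None of this is in the proposal. Separately, in this paper $G(\cdot,0)$ solves $LG=0$ away from the pole, not the adjoint equation, so your flux quantity is the wrong one; you abandon that route anyway.

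\textbf{The fix, which is the paper's route.} Run your integration on $\underline{m}(\rho):=\min_{S(0,\rho)}G(\cdot,0)$ instead of the mean.
\begin{itemize}
\item $\underline{m}$ is Lipschitz on $[\varepsilon,r_0]$.
\item At each point of differentiability, $\underline{m}'(\rho)=\partial_r G(z)$ for every minimum point $z\in S(0,\rho)$. This is the envelope (Danskin) argument: tangential derivatives vanish at a minimum on the sphere.
\item \cref{thm1} is a pointwise bound, so $\underline{m}(\varepsilon)\ge T\varepsilon^{2-n}$ still holds.
\end{itemize}
The same computation then shows, for every $r_0$ and any $C_0<(n-2)T$, that a positive-measure set of $\rho\in(0,r_0)$ has every minimum point satisfying $|\partial_r G|\rho^{n-1}\ge C_0$. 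That is exactly the lemma.

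The paper does this discretely. It builds a chain of minimum points moving inward by small increments $h_q$ and uses Taylor expansion to bound the total increase of $G$ by $\sum_q h_q S\,a_q^{1-n}$ plus negligible second-order terms. It then compares this Riemann sum with $S\varepsilon^{2-n}$ and contradicts $G(\varepsilon)\gtrsim\varepsilon^{2-n}$ from \cref{thm1}. Working with minimum points from the start makes your Step 2, and \cref{lemma4}, unnecessary.
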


\begin{proof}[Proof of \cref{lemma5}] Assume to the contrary that for any arbitrary small $C_0$, there exists some $i(M)$ and drift $B_{il}$ satisfying
the ambient conditions, so that eventually for any sequence of minimum points $\{a_{i,k}\}_{k=1}^{\infty}$ with $a_{i,k} \to 0$ as $k \to \infty$, so that
$$S := \sup \left( \left| \frac{\partial G}{\partial x} \right|_{a_{i,k}} a_{i,k}^{n-1} \right) < C$$
Thus, for each $k$, we have $\left| \frac{\partial G}{\partial x} \right|_{a_{i,k}} < \frac{S}{a_{i,k}^{n-1}}$.

We start with an arbitrary minimum point $a_{i,k_0}$. Initially we choose the increment $h_0$ so that
$$h_0 \cdot \sup_{B(0, a_{i,k_0}) \setminus B(0, \frac{1}{2} a_{i,k_0})} |G_i''(x)| \ll \frac{S}{a_{i,k_0}^{n-1}}, \quad h_0 \ll \frac{1}{2} a_{i,k_0}.$$

Now we define the sequence the following way: first consider the point $a_{i,(k_0+1)}$ which is the minimum point of the level set $\{x : G(x) = G(a_{i,k_0}) - h_0\}$. By definition we have $|a_{i,k_0} - h_0| \ge |a_{i,k_0+1}|$.

We continue this process till we reach some $t_1 > 1$ so that $a_{i, k_0+t_1-1} \ge \frac{1}{2} a_{i,k_0}$ and $h_0' = |a_{i, k_0+t_1-1} - \frac{1}{2} a_{i,k_0}| \le h_0$, while at the last stage we define $a_{i, k_0+t_1}$ as the minimum point of the level set $G(a_{i, k_0+t_1-1} - h_0') = G(\frac{1}{2} a_{i, k_0+t_1})$.

Now similarly we define the sequence $\{a_{i, k_0+t_m}\}_{m=1}^{\infty}$ for each $m > 1$ in the same way as above. In other words, we have for each $a_{i, k_0+t_m+1}$ is the minimum point of the level set $\{x : G(x) = G(a_{i, k_0+t_m} - h_m)\}$, where we have defined $h_m$ as
$$h_m \cdot \sup_{B(0, a_{i,k_0}) \setminus B(0, \frac{1}{2} a_{i, k_0+t_m})} |G_i''(x)| \ll \frac{S}{a_{i,k_0}^{n-1}}, \quad h_m \ll \frac{1}{2} a_{i, k_0+t_m}.$$

Then we further define the points $a_{i, k_0+t_m+j}$ for each $j \ge 1$ till we reach the point $a_{i, k_0+t_m+j-1} \ge \frac{1}{2} a_{i, k_0+t_m}$ with $h_m' = |a_{i, k_0+t_m+j-1} - \frac{1}{2} a_{i, k_0+t_m}|$ and $h_m' \le h_m$, and we define the point $a_{i, k_0+t_{m+1}}$ as the minimum point of the level set $G(a_{i, k_0+t_{m+1}} - h_m') = G(\frac{1}{2} a_{i, k_0+t_m})$. At each step the decrement is $h_m$ with $a_{i, k_0+t_m+j}$, for $1 \le j \le t_{m+1}-t_m$, being the minimum point of the level set of $G(a_{i, k_0 + t_m + j - 1}- h_m)$.

Since for each $m \ge 1$, we have a sequence of points $a_{i, k_0 + t_m + j}$, with $1 \le j \le t_{m+1} - t_m$, so that $a_{i, k_0 + t_{m+1}} \le \frac{1}{2} a_{i, k_0 + t_m}$ this sequence clearly converges to $0$ as $m \to \infty$. We continue this iteration till we reach some $t_{m_0} = N$, so that $a_{i, k_0 + N} = \varepsilon$ so that $G(\varepsilon) > K(M, \lambda) (\frac{1}{\varepsilon})^{n-2} \gg G(a_{i, k_0})$, where we have invoked the result of \cref{thm1}.

Now, we have at each stage,
\begin{equation} \label{eq25}
G(a_{i, k_0 + t_m + j - 1} - h_m) - G(a_{i, k_0 + t_m + j - 1}) = \left| \frac{\partial G}{\partial r} \right|_{a_{i, k_0 + t_m + j - 1}} \cdot h_m + O_m(h_m^2)
\end{equation}
Thus, we have,
\begin{align*}
\sum_{q=0}^{N} \frac{h_q S}{(a_{i, k_0 + q})^{n-1}} + O_q(h_q^2) &\ge G(a_{i, k_0+N}) - G(a_{i, k_0}) \approx G(\varepsilon) \\
&\gg G(a_{i, k_0})
\end{align*}
where we have defined $t_{m_0} = N$, and taken $h_q = h_m$ whenever $k_0 + t_m \le q \le k_0 + t_{m+1}$.

From the above, we thus get with an altered constant $S'$, that,
\begin{equation} \label{eq26}
\sum_{q=0}^{N} \frac{h_q S'}{(a_{i, k_0 + q})^{n-1}} \ge G(\varepsilon)
\end{equation}
Clearly, we also have, by \cref{thm1}, that as mentioned above, that $G(\varepsilon) > K(M, \lambda) (\frac{1}{\varepsilon})^{n-2}$.

But by a basic integral bound, and the construction of the minimum points $\{a_{i, k_0 + t_m + j}\}_{j=1}^{t_{m+1}-t_m}$, from (\ref{eq26}), we see that 
\begin{align*}
G(\varepsilon) &\ll K(M, \lambda) \left( \left(\frac{1}{\varepsilon}\right)^{n-2} - \left(\frac{1}{a_{k_0}}\right)^{n-2} \right) \\
&\le K(M, \lambda) \left(\frac{1}{\varepsilon}\right)^{n-2}
\end{align*}
This is a contradiction, which proves the lemma.\end{proof}
\subsection{Outline of subsequent argument}
We outline how the argument of this article is modified, in comparison to the original argument for the unit ball. In \cite{Pat25}, we consider the points of maximum and minimum for particular level sets. In the "near field" case, this involved moving "outwards" from the minimum point $z_y$ of a given level set, by a given infinitesimal amount. We then consider the value of the Green's function $G(z_y + h)$, and consider the minimum point $z_{y+dy}$ of the level set $\{y : G(y) = G(z_y + h)\}$ which by definition is at a distance at most $h$ from the level set $\{y : G(y) = G(z_y)\}$. Then we invoke the differential inequalities at each of the points $z_y$ and $z_{y+dy}$.

In the far field case, we looked at point $z_y$ of maximum of a given level set, and moved infinitesimally ``inward" to a new level set and considered the maximum point $z_{y-dy}$ of the new level set $\{y : G(y) = G(z_y - h)\}$.

In either of these cases, we have to deal with situations where radial values of these maximum or minimum points converge, which is then ruled out by invoking a-priori bounds on the Green's function  in domains of the form $A_{ij}$.

More crucially, the geometry of the unit ball is necessary in the argument of \cite{Pat25} in the far field situation, to ensure that the bounds from the exponential factor coming from the drift is bounded. In case the geometry of the domain is not the unit ball, it might happen in the far field case that while we move infinitesimally inward towards the origin in successive infinitesimal steps, we remain arbitrarily close to the boundary for long times.

In this article, we consider one modification of the earlier method, that is able to extend the earlier result to convex domains, using a Minkowski interpolation between the original domain $K$ and an interior ball $B_1$. By a careful modification of the existing argument of \cite{Pat25}, one then considers each such domain $K_y = (1-y)K_0 + y B_1$. One main departure from the previous argument, is that within $K\setminus B_1$, for one differential element $t_y$, the subsequent differential element might be significantly larger than the $t_y$, but remains controlled depending on the maximal ratio of the support functions across different directions for $K$. We are able to show that the gradient of the Green function, once with enough decay in $K\setminus B_1$, cannot then increase arbitrarily, which is sufficient for our purpose. 

In the far field case within $K\setminus B_1$, one moves inward from $z_y$ by some amount $t_y$. Instead of looking at the level set of the Green's function $\{y : G(y) = G(z_y - t_y)\}$, one instead looks at the maximum value of $G$ on the unique boundary $\partial K_{y+dy}$ on which $(z_y - t_y)$ lies. A careful modification of the original argument shows this also gives the expected decay of the Green's function using the differential inequality. Moreover, one does not have to keep track of possible accumulations of the maximum points, since now the maximum points under consideration lie on successive $\partial K_{y+ndy}$, for integer $n \ge 0$.

More importantly, for the convex domain $K$ considered here, this means these maximum points are isolated at controlled distances away from the boundary in a way that ensures that the radial integral of the drift converges. As a result, in the domain $K \setminus B_1$, one needs to consider the supporting hyperplanes of infinitesimally separated domains $K_y$ and $K_{y+dy}$, and modify the differential argument to this setting.

It is also easy to see that the integral of the drift is bounded when one considers the far field effect; it is easily seen to be bounded by the integral of the upper bound of the form $\frac{M}{\delta(x)^{1-\beta}}$ along the direction that has the $\hat{n}$ so that $h_K(\hat{n}) = \min_{S^{n-1}} h_K(y)$.

Thus, as in Equation (59) of \cite{Pat25}, in Case (1), we need to show that one cannot have 
$$\frac{|\{x : G_i(x, 0) \ge 2\alpha_i^*\}|}{|B(0, \frac{1}{L})|} \not\to 0, \text{ as } i \to \infty.$$

For further progress on this and related questions, it is plausible that both the original method of \cite{Pat25} as well as the modifications introduced in this manuscript, might prove to be useful in dealing with this question with more general non convex domains, and with poles of the Green function arbitrarily close to the boundary. One would plausibly study these methods in many other problems in partial differential equations involving potential terms and also the landscape function as considered in \cite{DGM23, Pog24} in the setting of similarly singular potentials.

\begin{remark}
    We remark that one of the motivations for studying the question in domains more general than the unit ball, is to consider a Bilipshitz mapping to transfer the problem to perturbed domains, including to the unit ball, where the drift is altered accordingly, while still diverging near the boundary in the manner of \cref{Laplacian}, and where the principal term gets altered to a divergence form operator. 
\end{remark}

\begin{remark}
    It follows from the structure of the Harnack constant, see for example Theorem 8.20 of \cite{Gt}, that the Harnack inequality is applicable in the setting of the drift in \cref{Laplacian} in a bounded domain. More generally, the Harnack inequality for such operators with drifts in more general Morrey spaces, was established in \cite{Naz12}.
\end{remark}

\begin{remark}
    We have stated the bounds as above for simplicity, but in general one should be able to consider $|\B|^2$ and $|\nabla\cdot \B|$ to belong respectively to the more general Morrey spaces $M^{\frac{n}{2-2\beta}}$ and $M^{\frac{n}{2-\beta}}$ as considered in \cite{Ha24}. In that case, the proof for the lower bound on the Green's function, which has been worked out in particular for the case of the drift of the form $M/\delta(X)^{1-\beta}$ would also need appropriate modifications, which we do not work out here.  We also note, as mentioned in the introduction, that drifts that are bounded like $\eps/\delta(X)$ have been considered in several places in the literature, such as \cite{An86, Singdr}, for sufficiently small $\eps$, for which Green's functions are bounded pointwise for poles arbitrarily close to the boundary. For the case of the unit ball, there is an example constructed with a radial drift with $\eps=1$, for which solutions do not exist \cite{Pat24}. In future work, we will study the applicability of the methods in this paper, for a wider class of function spaces.

\end{remark}

  In this paper \cref{lemma5} is added which provides an alternate streamlined proof in the far field case, however a  version of Lemma 9 and 10 of \cite{Pat25}  modified from the setting of the unit ball to this domain $K$, also completes the argument of this paper.

\section{Proof of \cref{thm2} for Case 1.}
The setup is as follows: we assume that as $i\to \infty$ we have $y_{i,\text{min}}\to 0$. For each such $i$, we choose a pair $a_{i,k_0},b_{i,k_0}$ of minimum and maximum points and a minimum point $\bar{a}_{i,k_0}$ so that each of these has magnitude strictly less than $|y_{i,\text{min}}|$ and without loss of generality we take $|\bar{a}_{i,k_0}|\leq |a_{i,k_0}|$. We consider the sequence of domains $A_{i,j}:= K_{y_j}\setminus B(0,b_{i,k_0})$, taking $y_j \to 0$ as $j\to \infty$, and in each of these domains $A_{i,j}$, use a-priori bounds on the Green function and it's first and second derivatives and use the differential argument in this domain. As $y_j\to 0$, due to the continuity of the Green function on the boundary $\partial K$, the sequence of Green functions at the maximum points of $\partial K_{y_j}$ goes to $0$. This will give us the desired contradiction in Section 5.4 and Section 6.

We first deal with Case 1 above. We first deal with the far field effect, where the gradient of the Green's function is shown to decay at a controlled rate in $K\setminus B(0,b_k)$. In section 4.1 we show that once a sufficient amount of decay of the Green function has taken place at these infinitesimally separated maximum points in the region $K\setminus B_1$, the Green function does not again grow to arbitrarily large values.

\subsection{Control of decay of gradient of Green's function in $K\setminus B_1$.}

Recall that we are considering the $C^{1}$ convex body $K$, with the property that there is a center point $x_0$, and a ball $B_0 := B(0,R) \subset \operatorname{int}(K)$, the interior of $K$. We consider the Minkowski interpolation between the two convex bodies $K$ and $B_1 := B(0, \tfrac{1}{2}R)$, thus getting a family of convex bodies
\begin{equation}
 K_y := (1-y)K + y B_1 : 0 \le y \le 1 .
\end{equation}


We have the following standard fact for the support function through Minkowski addition and dilation, for each $0 \le y \le 1$:

\begin{equation}\label{1}
h_{K_y} = h_{(1-y)K + y B_1} = (1-y) h_K + y h_{B_1}.
\end{equation}

It follows that for any $y_1<y_2$, we have $K_{y_1}\supset K_{y_2}$.

It is enough here that $K$ is merely convex. As we consider the fixed body $K_{y_j}$ with a fixed $y_j \neq 0$, for any arbitrarily large $j$ we use dilation and Theorem 2 of \cite{KrPa91}: since $B_1$ has a $C_1$ boundary, the body $K_y$ for $y\neq 0$ also has a $C^1$ boundary. This is enough to determine unique normals and unique supporting hyperplanes everywhere on the boundaries of $K_y$ for any $1\geq y\geq y_j$, which is sufficient for the subsequent argument. Finally we take the limit as $j\to \infty$.

For each $0 \le y \le 1$, consider a point $s_y$ (not necessarily unique) where the Green's function $G(\cdot, x_0)$ is maximized on $\partial K_y$, the boundary of $K_y$. 

\begin{enumerate}
\item First assume, without loss of generality, that the first time $|\nabla G(\cdot, x_0)|\leq 1$ at any of the maximum points of $K_{y_j}$, is at some point $s_{y_*}$ on $\partial K_{y_*}\subset K\setminus B_1$. This means in particular that for all the maximum points $\{ s_v : v \le \tfrac{1}{2}R \}$, we have $|\nabla G(s_v, x_0)| > 1$.

\item The case where, the first time $|\nabla G(\cdot, x_0)| \le 1$ on any of the maximum points of $K_{y_{j}}$,
is on the boundary point of $\partial B(x_0, v_*)$ for some $v_* \le \tfrac{1}{2}R$.

\item The case where, $|\nabla G(\cdot, x_0)| \le 1$ throughout the set $\{ s_y : 0 \le y \le 1 \} \cup \{ s_v : b_{i,k}\le v \le \tfrac{1}{2}R \}$.

\end{enumerate}

\bigskip

Assume now that we are in case (1). In this case, we have that for $\{ s_y : y_* \le y \le 1 \}$, the maximum on $\partial K_y$ of $G(\cdot, x_0)$ is attained at points where $|\nabla G(\cdot, x_0)| > 1$. Recall that we called these points ``maximum'' points on $\partial K_y$.

Consider a decreasing sequence of values of $y_j \to 0$, with $y_j \ge 0$. In the domain $K_{y_j} \setminus B(x_0, b_{i,k})$, we get through the Schauder estimate that the Green function belongs to $C^{2,\alpha}$, thus with absolute bounds on the value of the Green's function and its first and second derivatives, with the bounds dependent on $K_{y_j} \setminus B(x_0, b_{i,k})$.

Starting with the domain $K_{y_j}$, we move ``inward'' towards $\partial B_1$ and show the growth of the Green's function. The argument is an extension of that in Lemma 8 of \cite{Pat25}. We write $G(y)$ to mean $G(s_y, x_0)$ with the fixed pole $x_0$.

We then have, starting with the point $s_{y_j}$, using the differential technique of Lemma 8 of \cite{Pat25}, the Minkowski interpolation of the bodies $K$ and $B_1$, the following argument.

First we note that while $y \le 1$, and $y > y_j$, we do not use $dy$ as the differential increment, since the actual differential increments depend on the differential increments of the support function of the bodies $K_y$. At this stage, the increment is the differential decrement denoted by $t_y$, which relates to $dy$ in the standard manner that will be described below.

We have
\begin{equation}
G(s_y - t_y) = G(s_y) - t_y G'(s_y) + \frac{t_y^2}{2} G''(s_y - \theta t_y),
\label{eq:1}
\end{equation}

Note that if $n_y$ is the unit outward normal to $K_y$ at $s_y$, we have $t_y$ as the decrement along the unique line through $s_y$, perpendicular to the support plane of $K_y$ passing through $s_y$, so that
\[
t_y = h_{K_y}(n_y) - h_{K_{y+dy}}(n_y).
\]

So,
\[
t_y = (1-y) h_K(n_y) + y h_{B_1}(n_y)
- \big( (1-(y+dy)) h_K(n_y) + (y+dy) h_{B_1}(n_y) \big).
\]

So,
\begin{equation}
t_y = dy\big(h_{B_1}(n_y) - h_K(n_y)\big)=dy\Big(\frac{R}{2}-h_K(n_y)\Big).
\end{equation}

Note that the convex body $K$ is, by definition, such that the values
\[
\left( \frac{R}{2} - h_K( n) \right)
\]
are comparable to each other, as the unit vector $ n$ ranges over the unit sphere $S^{n-1}$. Thus in the limit as $dy \to 0$, we have that
\[
t_y \to 0,
\]
whatever the value of $n_y$.

Note that the point $ s_{y+dy}$ lies on the surface $\partial K_{y+dy}$ by definition, and we have
\begin{equation}\label{eq200}
G(s_y, x_0) - G(s_y - t_y n_y, x_0)
< G(s_{y+dy}, x_0) - G(u_{y+dy}, x_0)
= G(s_{y+dy}, x_0) - G(s_y, x_0),
\end{equation}
where $u_{y+dy}$ is the point on the line
\[
\{ s_{y+dy} + \hat n_{y+dy} t : t \in \mathbb{R} \}
\]
so that
\[
G(u_{y+dy}, x_0) = G(s_y, x_0).
\]

Note that such a point $u_{y+dy}$ exists by the intermediate value theorem.

From \eqref{eq:1}, we have that, using the fact that $G \in C^{2,\alpha}$,
\begin{equation}
G(s_y - t_y) 
= G(s_y) - t_y G'(s_y) + \frac{t_y^2}{2} G''(s_y)
+ O(t_y^{2+\alpha}).
\label{eq:2}
\end{equation}

where the final term comes from the $C^\alpha$ estimate on the second derivatives of the Green's function. 

Thus we get
\begin{equation}
G(s_y) - G(s_y - t_y)
= t_y G'(s_y) - \frac{t_y^2}{2} G''(s_y)
+ O(t_y^{2+\alpha}).
\label{eq:3}
\end{equation}

So we get,
\begin{equation}
\frac{G(s_y) - G(\bar{s}_{y})}{t_y}
= G'(s_y) - \frac{t_y}{2} G''(s_y) + O(t_y^{1+\alpha}).
\label{eq:4}
\end{equation}

We also define the distance $w_y := |s_{y+dy} - u_{y+dy}|$ along the line
\[
\{ s_{y+dy} + \hat n_{y+dy} t : t \in \mathbb{R} \}.
\]
Thus by definition, we get that $w_y \leq |h_{K(n_{y+dy}}-\frac{R}{2})|$.

Thus we also get that
\begin{equation}
G(s_{y+dy} + w_y)
= G(u_{y+dy})
= G(s_{y+dy}) + w_y G'(s_{y+dy})
+ \frac{w_y^2}{2} G''(s_{y+dy})
+ O(w_y^{2+\alpha}).
\label{eq:5}
\end{equation}

Now, we just note, using an adaptation of the argument of the \cite{Pat25} paper, that
\[
\frac{\partial^2 G}{\partial x_i^2}(s_{y+dy}) \le 0
\quad \text{for all } i=1,\dots,n-1,
\]
where we have oriented the axes so that
\[
\frac{d^2}{dx_n^2} G(s_{y+dy}) = G''(s_{y+dy}),
\]
where this second derivative is taken along the line
\[
\{ s_{y+dy} + \hat n_{y+dy} t : t \in \mathbb{R} \}.
\]

Thus we must have
\[
G''(s_{y+dy}) + B \cdot \hat n_{y+dy} \, G'(s_{y+dy}) \ge 0,
\]
from the defining equation for the Green's function.

Thus, from \eqref{eq:5}, we get
\begin{equation}
G(u_{y+dy}) - G(s_{y+dy})
\ge w_y G'(s_{y+dy})
+ \frac{w_y^2}{2} \big(- B \cdot \hat n_{y+dy}\big) G'(s_{y+dy})
+ O(w_y^{2+\alpha}).
\label{eq:6}
\end{equation}

From here we get,
\begin{equation}
G(u_{y+dy}) - G(s_{y+dy})
\ge w_y G'(s_{y+dy})
\left( 1 - \frac{w_y}{2} B \cdot \hat n_{y+dy} \right)
+ O(w_y^{2+\alpha}).
\label{eq:7}
\end{equation}

Note that $G'(s_{y+dy}) \le 0$, and we get,


\begin{equation}
G'(s_{y+dy}) \le \left( \frac{G(u_{y+dy}) - G(s_{y+dy})}{w_y} + O(w_y^{1+\alpha}) \right) \frac{1}{\left( 1 - w_y \frac{1}{2} \vec{B} \cdot \hat{n}_{y+dy} \right)} \tag{5}
\end{equation}

Using the binomial approximation only up to the first order, we get,
\begin{equation*}
G'(s_{y+dy}) \le \left( 1 + w_y \frac{1}{2} \vec{B} \cdot \hat{n}_{y+dy} \right) \left( \frac{G(u_{y+dy}) - G(s_{y+dy})}{w_y} + O(w_y^{1+\alpha}) \right) \tag{5}
\end{equation*}

Further, we get, $G'(\bar{s}_y) = G'(s_y - t_y \hat{n}) = G'(s_y) - t_y G''(s_y) + o(t_y^{1+\alpha})$ again using the fact that $G$ is locally in $C^{2,\alpha}$ in this domain. 
We now use the fact that $0 \leq  G(\bar{s}_y) - G(s_y) \leq  G(s_{y+dy}) - G(u_{y+dy})$, and 
\begin{align*}
w_y &\le h_{K_y}(n_{y+dy}) - h_{K_{y+dy}}(n_{y+dy}) \\
&= \left( (1-y) h_K(n_{y+dy}) + y h_{B_1}(n_{y+dy}) \right) - \left( (1-(y+dy)) h_K(n_{y+dy}) + (y+dy) h_{B_1}(n_{y+dy}) \right).\end{align*}
So,
\begin{align*}w_y &\le dy (h_K(n_{y+dy}) - h_{B_1}(n_{y+dy})) = dy \left( h_K(n_{y+dy}) - \frac{R}{2} \right).
\end{align*}

Recall that $t_y = dy \left( h_K(n_y) - \frac{R}{2} \right)$. 
Consider the ratio $r_y := \frac{h_K(n_{y+dy}) - R/2}{h_K(n_y) - R/2}$, then we get from the above considerations, that
\begin{align*}
G'(s_{y+dy}) &\le \left( 1 + w_y \frac{1}{2} \vec{B} \cdot \hat{n}_{y+dy} \right) \left( \frac{G(s_y) - G(\bar{s}_y)}{dy (h_K(n_{y+dy}) - \frac{R}{2})} + O(w_y^{1+\alpha}) \right) \\
&= \left( 1 + w_y \frac{1}{2} \vec{B} \cdot \hat{n}_{y+dy} \right) \left( \frac{G(s_y) - G(\bar{s}_y)}{r_y t_y} + O(w_y^{1+\alpha}) \right) \\
&= \left( 1 + w_y \frac{1}{2} \vec{B} \cdot \hat{n}_{y+dy} \right) \left( \frac{1}{r_y} (G'(s_y) - \frac{t_y}{2} G''(s_y) + o(t_y^{1+\alpha})) + O(w_y^{1+\alpha}) \right)
\end{align*}

Again, we have $G''(s_y) + \vec{B} \cdot \hat{n}_y G'(s_y) \ge 0$, and so we get from above, that
\begin{equation*}
G'(s_{y+dy}) \le \left( 1 + w_y \frac{1}{2} \vec{B} \cdot \hat{n}_{y+dy} \right) \left( \frac{1}{r_y} (G'(s_y) + \frac{t_y}{2} \vec{B} \cdot \hat{n}_y G'(s_y) + o(t_y^{1+\alpha})) + O(w_y^{1+\alpha}) \right)
\end{equation*}

so,
\begin{equation}\label{eq3000}
G'(s_{y+dy}) \le \frac{1}{r_y} G'(s_y) \left( 1 + w_y \frac{1}{2} \vec{B} \cdot \hat{n}_{y+dy} \right) \left( 1 + t_y \frac{1}{2} \vec{B} \cdot \hat{n}_y \right) + O(t_y^{1+\alpha})
\end{equation}

We iterate this, in the domain $K_{y_j}$ for the fixed $j$, noting
that $t_y, w_y$ are both chosen small enough in comparison to
$\inf_{x \in K_{y_j}} d(x, \partial \Omega)$. In the process, we get in the
continued product, an expression of the form $\prod_{k=0}^{N} (r_{y+kdy})$ in the
denominator, which is bounded from above and below,
and in the limit as $dy \to 0$ and thus $N \to \infty$, we get

\begin{equation} \label{eq20}
G'(s_y) \le \overline{K} G'(s_{y^*}) \quad \forall  y > y^*, y\leq 1,
\end{equation}

once we show that the contribution of the drift term in the continued product is finite.
\begin{lemma}
    The contribution of the drift term in the continued product from \cref{eq20} is finite as $t_y\to 0$.
\end{lemma}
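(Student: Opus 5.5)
The plan is to take logarithms of the continued product coming from iterating \cref{eq3000}. Starting at $y^*$ and stepping $y_k=y^*+k\,dy$, $k=0,\dots,N$, the drift contribution is
\begin{align*}
P_N:=\prod_{k=0}^{N}\Big(1+\tfrac{w_{y_k}}{2}\,\vec{B}\cdot\hat n_{y_{k}+dy}\Big)\Big(1+\tfrac{t_{y_k}}{2}\,\vec{B}\cdot\hat n_{y_k}\Big).
\end{align*}
Using $\log(1+x)\le x$ and $|\vec B\cdot\hat n|\le|\B|$, I will bound $\log P_N$ by
\begin{align*}
\tfrac12\sum_{k}\big(w_{y_k}|\B(s_{y_k+dy})|+t_{y_k}|\B(s_{y_k})|\big),
\end{align*}
which is a Riemann sum. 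The remainders $O(t_y^{1+\alpha})$ sum to $O(N\,dy^{1+\alpha})\to0$, because $t_y,w_y\le C\,dy$ with $C=\max_{S^{n-1}}|h_K-\tfrac R2|$, and the $C^{2,\alpha}$ constants are fixed on $A_{i,j}$. So only the Riemann sum needs control.

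By \cref{driftt}, the sum is at most $\tfrac{M}{2}\sum_k 2C\,dy\,\delta(s_{y_k})^{-(1-\beta)}$, up to the harmless shift from $s_{y_k+dy}$ to $s_{y_{k+1}}$. The key geometric input is a lower bound on $\delta(s_y)$ for $s_y\in\partial K_y$. I will show that
\begin{align*}
\delta(x)\ \ge\ c\,y \qquad \text{for every } x\in\partial K_y,\quad c=c(K,R)>0.
\end{align*}
For the proof, take any unit $n$. Then $h_K(n)-h_{K_y}(n)=y\,(h_K(n)-\tfrac R2)\ge y\,\tfrac R2$, since $h_K\ge R$ because $B(0,R)\subset K$. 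So $K_y+B(0,yR/2)\subset K$, since support functions add under Minkowski sums and containment of convex bodies is characterized by support functions. Hence every point of $K_y$, in particular every point of $\partial K_y$, has $\delta\ge yR/2$.

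With this bound, the sum is dominated by
\begin{align*}
CM(R/2)^{\beta-1}\sum_k dy\,y_k^{\beta-1}\ \to\ CM(R/2)^{\beta-1}\int_{y^*}^{1}y^{\beta-1}\,dy\ \le\ \frac{CM}{\beta}(R/2)^{\beta-1},
\end{align*}
which is finite uniformly in $y^*\ge0$ and in $j$ because $\beta>0$. This is exactly the integrability of $M/\delta^{1-\beta}$ mentioned in the outline. Thus $P_N\le \exp\!\big(CM\beta^{-1}(R/2)^{\beta-1}\big)$, independent of $dy$, of the drift configuration, and of $j$.

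I expect the main obstacle to be the interchange between the discrete product and the limit $dy\to0$. Two issues need care. First, the summand $y^{\beta-1}$ is monotone, so the Riemann sum is bounded by the integral plus the first term, $dy\,(y_j)^{\beta-1}$, which vanishes as $dy\to0$ for fixed $j$. Second, the sign-indefinite factor $\vec B\cdot\hat n$ might make some factors negative. This is avoided because the domain $K_{y_j}$ is fixed, so $|\B|$ is bounded there and each factor lies in $[1-C'dy,\,1+C'dy]$ once $dy$ is small. Finally, the denominators $r_y$ should be handled in the same log-sum and kept separate from the drift. Their product telescopes only approximately, so I will only claim two-sided boundedness from comparability of $h_K-\tfrac R2$ over $S^{n-1}$, as stated before \cref{eq20}.
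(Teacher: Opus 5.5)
Your proposal is correct and follows essentially the paper's route: a support-function lower bound $\delta(s_y)\ge c\,y$, followed by integrability of $y^{\beta-1}$ at $0$. You obtain the distance bound from the containment $K_y+B(0,yR/2)\subset K$, which avoids choosing a nearest point $p_y$ and a normal $m_y$ as the paper does, with constant $R/2$ rather than the paper's $h_{min}-\tfrac R2\ge\tfrac R2$. One small correction: since $G'\le 0$, what \cref{eq20} actually uses is a positive \emph{lower} bound on $P_N$, not the upper bound you state; your estimate of $\tfrac12\sum_k\big(w_{y_k}|\B(s_{y_k+dy})|+t_{y_k}|\B(s_{y_k})|\big)$ gives it equally well via $\log(1+x)\ge -2|x|$ for $|x|\le\tfrac12$.
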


\begin{proof}
    It suffices to check that at each step of this infinitesimal iteration, we have an upper bound on the drift term that is integrable. The drift is bounded by $1/\delta(x)^{1-\beta}$ . For any point on $s_y \in K_y$, and with $n_y$ the unit normal to $K_y$ at the point $s_y$, suppose that $\delta(s_y):=|s_y - p_y|$, with $p_y \in \partial K$. Suppose that the unit normal to $K$ at $p_y$ is in the direction $m_y$ which in general does not coincide with $n_y$. In this case, we have,
\begin{equation}
    h_{K}(m_y)-h_{K_y}(m_y)= h_{K}(m_y)- (1-y)h_{K}(m_y)-yh_{B_1}(m_y)=y(h_{K}(m_y)-\frac{R}{2})
\end{equation}
This is the distance between the two parallel supporting hyperplanes each with unit normal $m_y$, that are tangent respectively to the bodies $K$ and $K_y$. 

Note that by construction, the point $s_y$ lies on the closed half space delineated by the supporting hyperplane for the body $K_y$ in the direction $m_y$, and the point $p_y$ lies on the opposite half space. Thus we have $\delta(s_y)\geq y(h_{K}(m_y)-\frac{R}{2}) \geq y(h_{min} -\frac{R}{2})$. Here $h_{min}$ is the minimum of the support function of $K$ over all directions.

Thus as $j\to \infty$, for each $0\leq y\leq 1$, we get an upper bound for the drift that is given by,
\begin{equation}
    \B(s_y)\leq \frac{M}{(y(h_{min}-\frac{R}{2}))^{1-\beta}},
\end{equation}
and which in the infintesimal iteration, gives a finite value.

\end{proof}

Thus from \cref{eq20} and the Hopf principle, we have
\begin{align}\label{eq3000}
|G'(s_y)| \ge \overline{K} |G'(s_{y^*})|, \forall y > y^*, y\leq 1.
\end{align}

Here the constant $\overline{K}$ only depends on the domain $K$, and the parameter
and in particular,
independent of the point $y^*$.

Note that 
\begin{equation}G'(s_{y+dy}) \le \frac{1}{r_y} G'(s_y) \left( 1 + w_y \frac{1}{2} \vec{B} \cdot \hat{n}_{y+dy} \right) \left( 1 + t_y \frac{1}{2} \vec{B} \cdot \hat{n}_y \right) + O(t_y^{1+\alpha})\end{equation}

so, \begin{equation}G'(s_{y+dy}) \le \frac{1}{r_y} G'(s_y) \left( e^{w_y \frac{1}{2} \vec{B} \cdot \hat{n}_{y+dy} + O\left(w_y^2 \left(\vec{B} \cdot \hat{n}_{y+dy}\right)^2\right)} \right) \left( e^{t_y \frac{1}{2} \vec{B} \cdot \hat{n}_y + O(t_y^2 \left(\vec{B} \cdot \hat{n}_y\right)^2}\right) + O(t_y^{1+\alpha}) \end{equation}

so, \begin{equation}\label{eq22}
G'(s_{y+dy}) \le \frac{1}{r_y} G'(s_y) e^{w_y \frac{1}{2} \vec{B} \cdot \hat{n}_{y+dy} + t_y \frac{1}{2} \vec{B} \cdot \hat{n}_y} + O\left(t_y^2 \left(\vec{B} \cdot \hat{n}_{y+dy}\right)^2\right) + O(t_y^{1+\alpha}),
\end{equation}

with altered constants in the $O$ term on the right, noting that on $K_{y_j}$,
$|G'|$ has a-priori bounds, due to the Schauder
estimates, which gets incorporated into the $O$ term constant.

Iterating equation \cref{eq22}, using the integrability of the drift
term, we arrive at \cref{eq20}


It remains to show that there is an absolute constant $W$, so 
that for any $0 \le y \le y^*$, we have
\begin{align}\label{eq3333}
|G'(s_y)| < W,  \ \ \text{for any}\ \ 0 \le y \le y^*.
\end{align}
Recall 
that we are in the case (1) where the point $s_{y^*}$ is on the boundary 
$\partial K_{y^*}$, where $K_{y^*} = (1-y^*) K + y^* B_1$ for some $y^* \le 1$.

This follows by applying the previous argument in the 
domain $K \setminus K_{y^*}$. Suppose to the contrary, that there 
is some $y_0 >0$ with $ y_0 \le y^*$ with $|G'(s_y)| \ge W$. In that case, the previous argument again gives us that,

\begin{equation} \label{eq333}
|G'(s_{y^*})| > \overline{K} |G'(s_{y_0})| \ge \overline{K} W\ge 1,
\end{equation}

where we have chosen 
$\overline{K}W > 1$. This contradicts the fact that $|G'(s_{y*})|<1$.

In cases (2) and (3) the analogous versions of \cref{eq3000,eq3333} are obtained with the same argument with minor modifications. These estimates for all the three cases, are used in Section 5.4 .

\begin{figure}[h]
\centering
\includegraphics[width=0.7\textwidth]{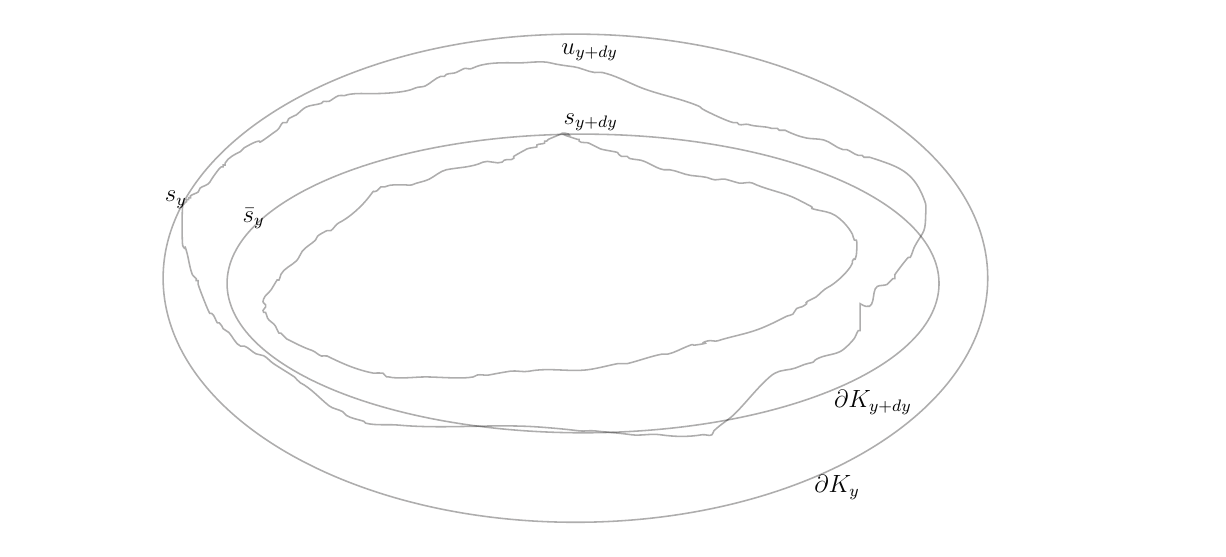}
\caption{The setting for the far field effect in $K\setminus B_1$, showing the maximum points on the surfaces $K_y$ and $K_{y+dy}$, which are the points where the Green function is maximized on the given sphere. We have $t_y=|s_y -\bar{s}_y|$, and $w_y=|s_{y+dy} -u_{y+dy}|$.}
\label{far field}
\end{figure}

\subsection{Far field decay of the gradient of the Green's function within 
the annular region $B_1 \setminus B(0, \frac{1}{L})$}
Here we now estimate the decay of the gradient of the Green's function within 
the annular region $B_1 \setminus B(0, \frac{1}{L})$, as in \cite{Pat25} . We adopt the 
argument of \cite{Pat25} to this setting. Note that while in 
\cite{Pat25} we considered the maximum points on the level set 
from a given pole, 
here we consider the maximum values of the Green's function on 
the boundaries of convex sets whose boundaries that are Minkowski 
interpolations of $K$ and $B_1$. 

We thus have, as before, that,
\begin{equation}
G(s_y - t_y) = G(s_y) - t_y G'(s_y) + \frac{t_y^2}{2} G''(s_y) + O(t_y^{2+\alpha})
\end{equation}
so, $G(s_y) - G(s_y - t_y) = t_y G'(s_y) - \frac{t_y^2}{2} G''(s_y) + O(t_y^{2+\alpha})$.

Thus, exactly as before we get, defining $w_y = |u_{y+dy} - s_{y+dy}|$ along the line $\{ s_{y+dy} + \hat{n}_{y+dy} t : t \in \mathbb{R} \}$, we get,
\begin{equation} \label{9}
G(s_{y+dy} + w_y) - G(s_{y+dy}) = w_y G'(s_{y+dy}) + \frac{w_y^2}{2} G''(s_{y+dy}) + O(w_y^{2+\alpha})
\end{equation}

At this point, we use the polar form of Laplacian and get
\begin{equation}
G(s_{y+dy} + w_y) - G(s_{y+dy}) > w_y G'(s_{y+dy}) + \frac{w_y^2}{2} \left( - \frac{n-1}{s_{y+dy}} - \vec{B}(s_{y+dy}) \cdot \hat{n}_{y+dy} \right) G'(s_{y+dy}) + O(w_y^{2+\alpha})
\end{equation}

So, we have,
\begin{equation}
G(s_{y+dy} + w_y) - G(s_{y+dy}) \ge w_y G'(s_{y+dy}) \left( 1 - \frac{w_y(n-1)}{2 s_{y+dy}} - \frac{w_y}{2} \vec{B}(s_{y+dy}) \cdot \hat{n}_{y+dy} \right) + O(t_y^{2+\alpha})
\end{equation}

Noting again that $G'(s_{y+dy}) \le 0$, and that $w_y \le t_y$ (Note the difference from the analysis for the domain $K \setminus B_1$), we get that
\begin{equation}
G'(s_{y+dy}) \le \frac{1}{1 - w_y \left( \frac{n-1}{2 s_{y+dy}} + \frac{1}{2} \vec{B}(s_{y+dy}) \cdot \hat{n}_{y+dy} \right)} \left( \frac{G(s_{y+dy} + w_y) - G(s_{y+dy})}{w_y} + O(t^{1+\alpha}) \right)
\end{equation}

Using binomial theorem, we now get,
\begin{equation} \label{10}
G'(s_{y+dy}) \le \left( 1 + w_y \left( \frac{n-1}{2 s_{y+dy}} + \frac{1}{2} \vec{B}(s_{y+dy}) \cdot \hat{n}_{y+dy} \right) \right) \left( \frac{G(s_{y+dy} + w_y) - G(s_{y+dy})}{w_y} \right) + O(t^{1+\alpha})
\end{equation}

Using the fact that $0 < G(s_y) - G(\underline{s}_y) < G(s_{y+dy}) - G(u_{y+dy})$, and that $w_y \le t_y$, we get. (also recalling that $s_{y+dy} + w_y = u_{y+dy}$),

\begin{equation}
G'(s_{y+dy}) \le \left( 1 + w_y \left( \frac{n-1}{2s_{y+dy}} + \frac{1}{2} \vec{B}(s_{y+dy}) \cdot \hat{s}_{y+dy} \right) \right) \left( \frac{G(s_y) - G(\underline{s}_y)}{t_y} + O(t_y^{1+\alpha}) \right)
\end{equation}

Using and the earlier equation, we get,
\begin{equation}
G'(s_{y+dy}) \le \left( 1 + w_y \left( \frac{n-1}{2s_{y+dy}} + \frac{1}{2} \vec{B}(s_{y+dy}) \cdot \hat{s}_{y+dy} \right) \right) \left( G'(s_y) - \frac{t_y}{2} G''(s_y) + O(t_y^{1+\alpha}) \right)
\end{equation}

so, \begin{align}
G'(s_{y+dy}) \le \left( 1 + w_y \left( \frac{n-1}{2s_{y+dy}} + \frac{1}{2} \vec{B}(s_{y+dy}) \cdot \hat{s}_{y+dy} \right) \right) \Bigg( G'(s_y) - \\ \frac{t_y}{2} \left( \frac{n-1}{s_y} + \vec{B}(s_y) \cdot \hat{s}_y \right) G'(s_y) + O(t_y^{1+\alpha}) \Bigg)
\end{align}

so, \begin{multline} \label{eq42}
G'(s_{y+dy}) \le G'(s_y) + \frac{w_y}{2} \left( \frac{n-1}{s_{y+dy}} + \vec{B}(s_{y+dy}) \cdot \hat{s}_{y+dy} \right) G'(s_y) \\+ \frac{t_y}{2} \left( \frac{n-1}{s_y} + \vec{B}(s_y) \cdot \hat{s}_y \right) G'(s_y) + O(t_y^{1+\alpha})
\end{multline}

Analogous to the case of the proof of Lemma 8 of\cite{Pat25} we first use Taylor's theorem up to second order to conclude,
\begin{equation}
|G'(s_y)| t_y + \frac{t_y^2}{2} G''(s_y - \theta_1 t_y) \le |G'(s_{y+dy})| w_y + \frac{w_y^2}{2} G''(s_{y+dy} + \theta_2 w_y)
\end{equation}

so, \begin{align}|G'(s_{y+dy})| \ge \frac{1}{w_y} \left( |G'(s_y)| t_y + \frac{t_y^2}{2} G''(s_y - \theta_1 t_y) - \frac{w_y^2}{2} G''(s_{y+dy} + \theta_2 w_y) \right)\end{align}

so, \begin{equation} \label{12}
|G'(s_{y+dy})| \ge \frac{t_y}{w_y} |G'(s_y)| + \frac{t_y^2}{2w_y} G''(s_y - \theta_1 t_y) - \frac{w_y}{2} G''(s_{y+dy} + \theta_2 w_y)
\end{equation}

Consider the two separate cases:

\begin{itemize}
\item \begin{equation*}(a'): |G'(s_y)| w_y \le |G'(s_y)| t_y - \left( \left( \frac{n-1}{b_{k_0}} \right) |G'(s_y)| + 2 \sup_{K_{y_j} \setminus B(0, b_{k_0})} |G''(x)| \right) \frac{t_y^2}{2}\end{equation*}

so,

\begin{equation*}|G'(s_y)| \le \frac{t_y}{w_y} |G'(s_y)| - \frac{t_y^2}{2w_y} \left( \left( \frac{n-1}{b_{k_0}} \right) |G'(s_y)| + 2 \sup_{K_{y_j} \setminus B(0, b_{k_0})} |G''(x)| \right)
\end{equation*}

so, \begin{equation*}\frac{t_y}{w_y} |G'(s_y)| \ge |G'(s_y)| + \frac{t_y^2}{2w_y} \left( \left( \frac{n-1}{b_{k_0}} \right) |G'(s_y)| + 2 \sup_{K_{y_j} \setminus B(0, b_{k_0})} |G''(x)| \right)
\end{equation*}

So, using Equation \ref{12} above, we get, noting that $t_y \ge w_y$,
\begin{multline}
|G'(s_{y+dy})| \ge |G'(s_y)| + t_y \left( \frac{n-1}{b_{k_0}} \right) |G'(s_y)| + \frac{t_y^2}{w_y} \left( 2 \sup_{K_{y_j} \setminus B(0, b_{k_0})} |G''(x)| \right)\\ + \frac{t_y^2}{2w_y} G''(s_y - \theta_1 t_y) - \frac{w_y}{2} G''(s_{y+dy} + \theta_2 w_y)
\end{multline}

so, 
\begin{align}\label{eqa'}
|G'(s_{y+dy})| \ge |G'(s_y)| \left( 1 + t_y \left( \frac{n-1}{b_{k_0}} \right) \right)
\end{align}

\item (b') On the other hand, when we have the inequality opposite to (a') above, we get
\begin{equation}
\left( \left( \frac{n-1}{b_{k_0}} \right) |G'(s_y)| + 2 \sup_{K_{y_j} \setminus B(0, b_{k_0})} |G''(x)| \right) t_y^2 + |G'(s_y)| w_y > |G'(s_y)| t_y
\end{equation}

Noting that we have in this regime $|G'(s_y)| > 1$, we get
\begin{equation}
\left( \frac{n-1}{b_{k_0}} \right) t_y^2 + 2 \frac{\left( \sup_{K_{y_j} \setminus B(0, b_{k_0})} |G''(x)| \right)}{|G'(s_y)|} t_y^2 + w_y > t_y
\end{equation}

so, \begin{align}w_y \ge t_y - \left( \frac{n-1}{b_{k_0}} \right) t_y^2 - 2 \frac{\sup |G''(x)|}{|G'(s_y)|} t_y^2 \ge t_y - \left( \left( \frac{n-1}{b_{k_0}} \right) - 2 \sup |G''(x)| \right) t_y^2\end{align}

We get from \cref{eq42}, noting that $G'(s_y) \le 0$, when $$\left( \frac{n-1}{s_{y+dy}} + \vec{B}(s_{y+dy}) \cdot \hat{s}_{y+dy} \right) \ge 0,$$ that,
\begin{multline}
G'(s_{y+dy}) \le G'(s_y) + \frac{1}{2} \left( t_y - \left( \frac{n-1}{b_{k_0}} + 2 \sup_{K_{y_j} \setminus B(0, b_{k_0})} |G''(x)| \right) t_y^2 \right) \left( \frac{n-1}{s_{y+dy}} + \vec{B}(s_{y+dy}) \cdot \hat{s}_{y+dy} \right) G'(s_y) \\
+ \frac{t_y}{2} \left( \frac{n-1}{s_y} + \vec{B}(s_y) \cdot \hat{s}_y \right) G'(s_y) + O(t_y^{1+\alpha}).
\end{multline}

So, we get,
\begin{align}
G'(s_{y+dy}) \le G'(s_y) + \frac{t_y}{2} \left( \frac{n-1}{s_{y+dy}} + \vec{B}(s_{y+dy}) \cdot \hat{s}_{y+dy} \right) G'(s_y)\\ + \frac{t_y}{2} \left( \frac{n-1}{s_y} + \vec{B}(s_y) \cdot \hat{s}_y \right) G'(s_y) + O(t_y^{1+\alpha}) + O(t_y^2)
\end{align}

so, \begin{multline} \label{eqb'}
G'(s_{y+dy}) \le G'(s_y) + t_y \Bigg( \frac{n-1}{2} \Bigg( \frac{1}{s_y} + \frac{1}{s_{y+dy}} \Bigg)+\frac{1}{2} \Bigg( \vec{B}(s_{y+dy}) \cdot \hat{s}_{y+dy} + \vec{B}(s_y) \cdot \hat{s}_y \Bigg) \Bigg) G'(s_y) \\+ O(t_y^{1+\alpha})
\end{multline}

When $$\left( \frac{n-1}{s_{y+dy}} + \vec{B}(s_{y+dy}) \cdot \hat{s}_{y+dy} \right) < 0,$$ then we simply use the fact that $t_y \ge w_y$, to get the same.
\end{itemize}

Thus, again iterating inequalities the previous inequalities we get, and taking $dy \to 0$, $N \approx \frac{\text{diam}(K)}{dy}$, the contribution of the second order term goes to 0, and from the principal term we get in the case of (b') above, the solution to the differential inequality,
\begin{equation}
G'' + \left( \frac{n-1}{r} \right) G' + \vec{B} \cdot \hat{n} G' \ge 0
\end{equation}

Note that \cref{eqa',eqb'} are used over successive spheres that are infinitesimally separated by $t_y$, and the integration is direct and does not need to additionally rule out the possibility of the level sets accumulating arbitrarily close to each other in comparison to $t_y$ as in \cite{Pat25}.

\subsection{Near field case}

Consider now the minimum values of the Green function on the surfaces $\partial B_v$ with $a_{k_0} \le v \le \frac{1}{2} R$. Here we modify and streamline the earlier argument of \cite{Pat25} and instead of looking at maximum points of successive level sets, we look at the maximum points on successive spheres in a way that streamlines the near field argument. Note from \cref{lemma4,lemma5}, that $G'(a_k) > 0,G'(\bar{a}_k) $, and so we will have chosen $t_y$ small enough compared with $|G'(a_k)|$, $a_k$, and take the limit as $t_y \to 0$.

Thus, we have, at each step of the iteration, with $z_y$ the point on $\partial B_y$ where the minimum value of $G(\cdot)$ is attained on $\partial B_y$, we have,
\begin{equation}
G(z_y + t_y) = G(z_y) + t_y G'(z_y) + \frac{t_y^2}{2} G''(z_y) + O(t_y^{2+\alpha})
\end{equation}
where we have collected an error term $O(t_y^{2+\alpha})$ as in the previous case. We write $\bar{z}_{y+dy}$ for the point on the radial line joining the origin $0$ to the point $z_{y+dy}$, so that $G(z_y) = G(\bar{z}_{y+dy})$.

So, we get,
\begin{equation} \label{15}
\frac{G(z_y + t_y) - G(z_y)}{t_y} = G'(z_y) + \frac{t_y}{2} G''(z_y) + O(t_y^{1+\alpha})
\end{equation}

Note that $G(\bar{z}_{y+dy}) - G(z_{y+dy}) \ge G(z_y) - G(z_y + t_y) \ge 0$, and $w_y := |z_{y+dy} - \bar{z}_{y+dy}| \le t_y$. Thus, we get, using the Taylor expansion along the line joining $z_{y+dy}$ and $\bar{z}_{y+dy}$, that
\begin{equation} \label{16}
G(\bar{z}_{y+dy}) = G(z_{y+dy}) - w_y G'(z_{y+dy}) + \frac{w_y^2}{2} G''(z_{y+dy}) + O(w_y^{2+\alpha})
\end{equation}


\begin{figure}[h]
\centering
\includegraphics[width=0.65\textwidth]{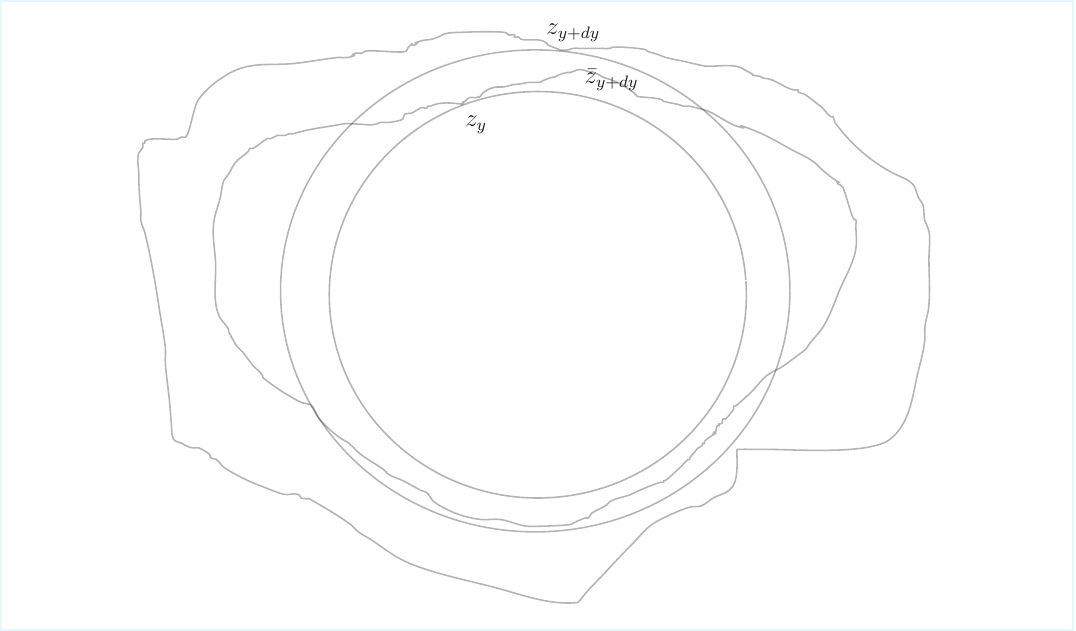}
\caption{The setting for the near field effect, showing the minimum points on two infinitesimally separated spheres, which are the points where the Green function is minimized on these infinitesimally separated spheres. We have $w_y=|z_{y+dy} -\bar{z}_{y+dy}|$, and the radial separation between the two infinitely separated spheres is $t_y$.}
\label{near field}
\end{figure}

Thus we have,
\begin{equation}
G(z_{y+dy}) - G(\bar{z}_{y+dy}) \le -w_y G'(z_{y+dy}) + \frac{w_y^2}{2} \left( -\left( \frac{n-1}{z_{y+dy}} \right) - \vec{B}(z_{y+dy}) \cdot \hat{z}_{y+dy} \right) G'(z_{y+dy}) + O(w_y^{2+\alpha})
\end{equation}
so we get, \begin{equation}G(z_{y+dy}) - G(\bar{z}_{y+dy}) \le w_y (-G'(z_{y+dy})) \left( 1 + \frac{(n-1)w_y}{2 z_{y+dy}} + \frac{1}{2} \vec{B}(z_{y+dy}) \cdot \hat{z}_{y+dy} w_y \right) + O(w_y^{2+\alpha})\end{equation}

so we get, \begin{equation}G'(z_{y+dy}) \le \frac{(-1)}{\left( 1 + w_y \left( \frac{n-1}{2 z_{y+dy}} + \frac{1}{2} \vec{B}(z_{y+dy}) \cdot \hat{z}_{y+dy} \right) \right)} \left( \frac{G(\bar{z}_{y+dy}) - G(z_{y+dy})}{w_y} + O(w_y^{1+\alpha}) \right)\end{equation}

so we get, \begin{equation}G'(z_{y+dy}) \le \left( 1 - w_y \left( \frac{n-1}{2 z_{y+dy}} + \frac{1}{2} \vec{B}(z_{y+dy}) \cdot \hat{z}_{y+dy} \right) \right) \left( \frac{G(z_{y+dy}) - G(\bar{z}_{y+dy})}{w_y} + O(w_y^{1+\alpha}) \right)\end{equation}

Noting that $w_y \le t_y$, we get from above, using that, $G(z_{y+dy}) - G(\bar{z}_{y+dy}) \le G(z_y + t_y) - G(z_y) \le 0$, that,
\begin{equation}
G'(z_{y+dy}) \le \left( 1 - w_y \left( \frac{n-1}{2 z_{y+dy}} + \frac{1}{2} \vec{B}(z_{y+dy}) \cdot \hat{z}_{y+dy} \right) \right) \left( \frac{G(z_y + t_y) - G(z_y)}{t_y} + O(w_y^{1+\alpha}) \right)
\end{equation}
Using Equation \ref{15}, we get,
\begin{equation}
G'(z_{y+dy}) \le \left( 1 - w_y \left( \frac{n-1}{2 z_{y+dy}} + \frac{1}{2} \vec{B}(z_{y+dy}) \cdot \hat{z}_{y+dy} \right) \right) \left( G'(z_y) + \frac{t_y}{2} G''(z_y) + O(t_y^{1+\alpha}) \right)
\end{equation}
so, 
\begin{multline}
G'(z_{y+dy}) \le \left( 1 - w_y \left( \frac{n-1}{2 z_{y+dy}} + \frac{1}{2} \vec{B}(z_{y+dy}) \cdot \hat{z}_{y+dy} \right) \right) \Bigg( G'(z_y) + \\ \frac{t_y}{2} \left( -\frac{n-1}{z_y} + \frac{1}{2} \vec{B}(z_y) \cdot \hat{z}_y \right) G'(z_y) + O(t_y^{1+\alpha}) \Bigg)
\end{multline}

so, \begin{multline}
G'(z_{y+dy}) \le G'(z_y) \left( 1 - w_y \left( \frac{n-1}{2 z_{y+dy}} + \frac{1}{2} \vec{B}(z_{y+dy}) \cdot \hat{z}_{y+dy} \right) - t_y \left( \frac{n-1}{2 z_y} + \frac{1}{2} \vec{B}(z_y) \cdot \hat{z}_y \right) \right) + O(t_y^{1+\alpha})
\end{multline}


We can again consider the Taylor expansion up to second order, to conclude
\begin{equation} \label{18}
|G'(z_y)| t_y + \frac{t_y^2}{2} G''(z_y + \theta t_y) \le |G'(z_{y+dy})| w_y + \frac{w_y^2}{2} G''(z_{y+dy} + \theta w_y)
\end{equation}
\begin{itemize}
    \item 

(a) First consider the case where we have
\begin{equation} \label{19}
(a) |G'(z_y)| w_y > t_y |G'(z_y)| - 2 t_y^2 \sup_{K_k} |G''(x)|
\end{equation}

Thus, we get that
\begin{multline} \label{20}
|G'(z_{y+dy})| \ge |G'(z_y)| - w_y \left( \frac{n-1}{2 z_{y+dy}} + \frac{1}{2} \vec{B}(z_{y+dy}) \cdot \hat{z}_{y+dy} \right) |G'(z_y)| \\
- t_y |G'(z_y)| \left( \frac{n-1}{2 z_y} + \frac{1}{2} \vec{B}(z_y) \cdot \hat{z}_y \right) + O(t_y^{1+\alpha})
\end{multline}

So when $$\left( \frac{n-1}{2 z_{y+dy}} + \frac{1}{2} \vec{B}(z_{y+dy}) \cdot \hat{z}_{y+dy} \right) \ge 0,$$ we simply use the fact that $w_y \le t_y$, to get from \ref{20}, that
\begin{equation} \label{21}
|G'(z_{y+dy})| \ge |G'(z_y)| - t_y |G'(z_y)| \left( \frac{n-1}{2 z_{y+dy}} + \frac{n-1}{2 z_y} + \frac{1}{2} \vec{B}(z_{y+dy}) \cdot \hat{z}_{y+dy} + \frac{1}{2} \vec{B}(z_y) \cdot \hat{z}_y \right) + O(t_y^{1+\alpha})
\end{equation}

 On the other hand, when $\left( \frac{n-1}{2 z_{y+dy}} + \frac{1}{2} \vec{B}(z_{y+dy}) \cdot \hat{z}_{y+dy} \right) < 0$, we get using equation \ref{19}, that
\begin{multline}
|G'(z_{y+dy})| \ge |G'(z_y)| - \left( \frac{n-1}{2 z_{y+dy}} + \frac{1}{2} \vec{B}(z_{y+dy}) \cdot \hat{z}_{y+dy} \right) \left( t_y |G'(z_y)| - 2 t_y^2 \sup_{K_k} |G''(x)| \right) \\
- t_y |G'(z_y)| \left( \frac{n-1}{2 z_y} + \frac{1}{2} \vec{B}(z_y) \cdot \hat{z}_y \right) + O(t_y^{1+\alpha})
\end{multline}

Thus,
\begin{multline} \label{eq71}
|G'(z_{y+dy})| \ge |G'(z_y)| - t_y |G'(z_y)| \Big( \frac{n-1}{2 z_{y+dy}} + \frac{n-1}{2 z_y} + \frac{1}{2} \vec{B}(z_{y+dy}) \cdot \hat{z}_{y+dy} \\+ \frac{1}{2} \vec{B}(z_y) \cdot \hat{z}_y \Big) + O(t_y^2) + O(t_y^{1+\alpha})
\end{multline}

\item (b) On the other hand, when we have the inequality opposite to Equation \ref{19} above, so that, $$|G'(z_y)| w_y \le |G'(z_y)| t_y - 2 t_y^2 \sup_{K_k} |G''(x)|,$$ then we use the Taylor expansion argument from Equation \ref{18} to get,

\begin{equation}
w_y |G'(z_{y+dy})| \ge |G'(z_y)| t_y + \frac{t_y^2}{2} G''(z_y + \theta t_y) - \frac{w_y^2}{2} G''(z_{y+dy} + \theta_1 w_y)
\end{equation}

so, \begin{equation}w_y |G'(z_{y+dy})| \ge |G'(z_y)| w_y + \frac{t_y^2}{2} G''(z_y + \theta t_y) - \frac{w_y^2}{2} G''(z_{y+dy} + \theta_1 w_y) + 2 t_y^2 \sup_{K_k} |G''(x)|.\end{equation}

so, \begin{equation}|G'(z_{y+dy})| \ge |G'(z_y)| + \frac{1}{w_y} \left( 2 t_y^2 \sup_{K_k} |G''(x)| + \frac{t_y^2}{2} G''(z_y + \theta t_y) - \frac{w_y^2}{2} G''(z_{y+dy} + \theta_1 w_y) \right).\end{equation}

so, \begin{equation} \label{eq75}
|G'(z_{y+dy})| \ge |G'(z_y)|
\end{equation}
\end{itemize}
Thus, combining the decay from Equation \cref{eq71,eq75} as well as Equation (23), we get that the gradient satisfies the solution
\begin{equation}
G'' + \left( \frac{n-1}{r} \right) G' + \vec{B} \cdot \hat{n} G' \le 0
\end{equation}

Note that \cref{eq71,eq75} are used over successive spheres that are infinitesimally separated by $t_y$, and one does not need to additionally rule out level sets that become arbitrarily close to each other in comparison to $t_y$, as is the case in \cite{Pat25}.

\subsection{Change in the Green's function in $K_{y_j}\setminus B(0,1/L)$.}

Now consider the three separate cases outlined in Section 4.1 .
\begin{itemize}
\item  Here we have $|G'(z_y)| > 1 \ \forall \ y \in (b_{i, k_0}, \frac{1}{2} R)$, and there is some $y_* = \max_{0 \le y \le 1} \{y : G'(s_y) \le 1\}$, so that $G'(s_{y_*}) \le 1$.

Thus, by the earlier computation, and a crude bound, we have for each $1\geq y\geq y_{*}$,
$$|G'(s_y)| \le \left| \frac{\partial G}{\partial r} \right|_{b_{i,k}} \frac{b_{i,k}^{n-1}}{(\frac{R}{2})^{n-1}}$$

Further, for $y \le y_*$, we have $|G'(s_y)| \le W$ from the argument preceding \cref{eq333}.

We also get for each $y_* \le y \le 1$, using the same argument, that,
\begin{equation} \label{eq27}
|G'(s_y)| \le W \cdot \frac{1}{(\frac{R}{2})^{n-1}} \left| \frac{\partial G}{\partial r} \right|_{b_{i,k}} b_{i,k}^{n-1}
\end{equation}

Thus, the total change in the value of the Green's function between the maximum point on $B(0, \frac{1}{L})$ and the maximum point of $\partial K_j$, for the fixed value of $j$ in $A_{i,j}$, is given by

$\Delta G_j = \Delta G_{1,j} + \Delta G_{2,j} + \Delta G_{3,j}$, where 
$\Delta G_{1,j} = (a_i^* - G(s_1))$, $\Delta G_{2,j} = G(s_1) - G(s_{y_*})$, $\Delta G_{3,j} = G(s_{y_*}) - G(s_y)$.

We thus have, \begin{align}\Delta G_{1,j} \le \left| \frac{\partial G}{\partial r} \right|_{b_{i,k}} b_{i,k}^{n-1} \left( L^{n-2} - \left( \frac{2}{R} \right)^{n-2} \right) \le \left| \frac{\partial G}{\partial r} \right|_{b_{i,k}} b_{i,k}^{n-1} L^{n-2}.\end{align}

Further, with the bound on $|G'(s_y)|$ from (\ref{eq27}), we get that $\Delta G_{2,j} \le W \left( \frac{2}{R} \right)^{n-1} \left| \frac{\partial G}{\partial r} \right|_{b_{i,k}} b_{i,k}^{n-1} \cdot D$ where $D$ is the diameter of convex body $K$. This is a crude bound that is attained by noting that we have chosen the points $s_y$ to be the maximum points on the boundaries $\partial K_y$, and thus at each stage, the change in the Green's function values is given by $|G'(s_y)| u_y + o(t_y^2)$, with $u_y \le R_0 t_y$ where $R_0$ is a fixed constant..

As $t_y \to 0$, the $o(t_y^2) M \approx o(t_y^2) \frac{D}{t_y} \approx o(t_y) \to 0$, and we have
$$\Delta G_{2,j} \le W \left( \frac{2}{R} \right)^{n-1} \left| \frac{\partial G}{\partial r} \right|_{b_{i,k}} b_{i,k}^{n-1} D$$

By a similar argument, for $\Delta G_{3,j}$, we see that the bound is given by $\Delta G_{3,j} \le WD$. \footnote{Note that this situation where $|G'(s_y)| \le W$ for $0 \le y \le y_*$, also takes care of the cases where the contribution due to the second derivative, at each infinitesimal step, to the change of the Green's function, is comparable to the contribution to the change of the Green's function due to the first order term. In particular, we may even have the situation where $|G'(s_y)| = 0$ in this regime.}. In this case, again, the total second order contribution is bounded crudely by $o(t_y^2) \frac{D}{t} = o(t_y)$ which goes to $0$ as $t_y \to 0$.

Thus the total change in the Green's function, $\Delta G_j$, is bounded from above by
\begin{equation*}
\Delta G_j \le \left| \frac{\partial G}{\partial r} \right|_{b_{i,k}} b_{i,k}^{n-1} L^{n-2} + W \left( \frac{2}{R} \right)^{n-1} \left| \frac{\partial G}{\partial r} \right|_{b_{i,k}} b_{i,k}^{n-1} D + WD.
\end{equation*}

As $j \to \infty$, we get, by the fact that $G_j \to 0$ continuously at the boundary, that
\begin{equation} \label{eq28}
\Delta G = \alpha_i^* \le \left| \frac{\partial G}{\partial r} \right|_{b_{i,k}} b_{i,k}^{n-1} \left( L^{n-2} + \left( \frac{2}{R} \right)^{n-1} WD \right) + WD
\end{equation}

\item In case 2, by similar arguments, we get that
\begin{equation*}
\Delta G_{1,j} \le \left| \frac{\partial G}{\partial r} \right|_{b_{i,k}} b_{i,k}^{n-1} \left( L^{n-2} - \left( \frac{1}{R_1} \right)^{n-2} \right) \le \left| \frac{\partial G}{\partial r} \right|_{b_{i,k}} b_{i,k}^{n-1} L^{n-2}
\end{equation*}
$$\Delta G_{2,j} \le WD,$$
where $\Delta G_{1,j} = \alpha_i^* - G(v_*)$, and $|v_*| = \frac{1}{R_1}$. 

By adopting the earlier argument, it also follows that for all $\frac{R}{2} \ge v \ge v_*$, we have $|G'(s_v)| \le W$. So we get, \begin{equation*}\Delta G = \alpha_i^* \le \Big|\frac{\partial G}{\partial r}\Big|_{b_{i,k}} b_{i,k}^{n-1} (L^{n-2}) + WD\end{equation*}

\item In this case, by an argument identical to ones used in the previous two cases, we get that $|G'(s_v)| \le W_1$ for some altered constant $W_1$, and so,
$$\alpha_i^* = \Delta G \le W_1 D.$$

\end{itemize}

Now, consider the two sequences of minimum points $a_{i,k}|_{k=1}^{\infty}$ and $\bar{a}_{i,k}|_{k=1}^{\infty}$ from \cref{lemma4,lemma5}. Also, consider the associated set of maximum points to $(a_{i,k}|_{k=1}^{\infty})$, which we denoted as $(b_{i,k}|_{k=1}^{\infty})$.

Thus, identical to Lemma 7 of \cite{Pat25}, we get the following result.

\begin{lemma} For any $a_{i,k_0} \le r_1 \le r_2$ (or any $\bar{a}_{i,k_0} \le r_1 < r_2$) with $r_1$ and $r_2$ minimum points of some spheres $\partial B_v$,
\begin{align}
G(r_1, 0) - G(r_2, 0) \ge C \left| \frac{\partial G}{\partial r} \right|_{a_{i,k_0}} a_{i,k_0}^{n-1} \Big( \frac{1}{r_1^{n-2}} - \frac{1}{r_2^{n-2}} \Big) \\
 G(r_1, 0) - G(r_2, 0)> C \left| \frac{\partial G}{\partial r} \right|_{\bar{a}_{i,k_0}} \bar{a}_{i,k_0}^{n-1}  \Big(\frac{1}{r_1^{n-2}} - \frac{1}{r_2^{n-2}}\Big).
\end{align}
\end{lemma}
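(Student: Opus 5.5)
The plan is to integrate the near field differential inequality of the previous subsection radially from the minimum point $a_{i,k_0}$ (respectively $\bar a_{i,k_0}$) outward. Write $m(v):=\min_{\partial B_v}G(\cdot,0)$ and $g(v):=|G'(z_v)|$, where $z_v$ is the minimum point on $\partial B_v$ and $G'$ is the radial derivative.

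\textbf{Step 1: iterate the near field inequalities.} On consecutive spheres at radial separation $t_y$, \cref{eq71} and \cref{eq75} give
\[
g(y+dy)\ge g(y)\Big(1-t_y\Big(\tfrac{n-1}{y}+\vec B\cdot\hat z\Big)\Big)+O(t_y^{1+\alpha}).
\]
We iterate this inside the fixed annulus $A_{i,j}$, where the Schauder bounds control the $O(t_y^{1+\alpha})$ terms. Letting $t_y\to 0$, the accumulated error is $N\cdot O(t_y^{1+\alpha})=O(t_y^\alpha)\to 0$. In the limit we obtain
\[
g(v)\,v^{n-1}\ge g(a_{i,k_0})\,a_{i,k_0}^{n-1}\exp\Big(-\int_{a_{i,k_0}}^{v}|\B|\Big).
\]

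\textbf{Step 2: bound the drift factor.} Since $v\le \frac12R$, every point involved lies in $B_1$, where $\delta\ge R/2$. Hence $|\B|\le M(R/2)^{\beta-1}$ there, and the exponential factor is bounded below by a constant $C(M,R,\beta)>0$. This gives
\[
g(v)\ge C\,g(a_{i,k_0})\,a_{i,k_0}^{n-1}\,v^{1-n}\qquad\text{for all } v\ge a_{i,k_0}.
\]

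\textbf{Step 3: integrate once more.} We compare $m(r_1)-m(r_2)$ with the radial derivative along the chain of minimum points. As in the derivation of \cref{eq71}, each step changes $m$ by at least $g(y)\,t_y$ up to $O(t_y^2)$: indeed $G(z_y)-G(z_{y+dy})\ge G(z_y)-G(z_y+t_y)$ up to sign conventions, because $z_{y+dy}$ minimizes on $\partial B_{y+dy}$. Summing these increments and letting $t_y\to0$ yields
\[
m(r_1)-m(r_2)\ge \int_{r_1}^{r_2}C\,g(a_{i,k_0})\,a_{i,k_0}^{n-1}v^{1-n}\,dv = \frac{C}{n-2}\,g(a_{i,k_0})\,a_{i,k_0}^{n-1}\big(r_1^{2-n}-r_2^{2-n}\big),
\]
which is the first inequality. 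The second inequality follows by running the same argument from $\bar a_{i,k_0}$; \cref{lemma5} guarantees that the right-hand side is then nondegenerate.

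\textbf{Main obstacle.} The hardest step is Step 3: justifying that the value drop between the minimum points is at least $g\,t_y$, not just that $g$ itself decays. Case (b) of the near field argument only gives monotonicity of $g$, so the second order terms must be absorbed uniformly. I would handle this as in Lemma 7 of \cite{Pat25}. Choose $t_y$ small relative to $g(a_{i,k_0})$ and to $\sup_{K_k}|G''|$; this is possible because Step 2 makes $g$ bounded below on the compact range $[a_{i,k_0},\tfrac12 R]$. Then the quadratic corrections total $o(1)$ and disappear in the limit.
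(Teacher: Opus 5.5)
Your proposal is correct and follows essentially the paper's route; the paper itself just says the lemma is ``identical to Lemma 7 of \cite{Pat25}''. You integrate the near field inequality $G''+\frac{n-1}{r}G'+\vec{B}\cdot\hat n\,G'\le 0$ at the sphere minima to get $|G'(z_v)|\,v^{n-1}\ge C\,|G'(a_{i,k_0})|\,a_{i,k_0}^{n-1}$, with $C$ uniform in $i$ because $\delta\ge R/2$ on $B_1$, and then integrate once more using $G(z_{y+dy})\le G(z_y+t_y\hat z_y)$. Your ``main obstacle'' is not actually an obstacle: the per-step error in Step 3 is at most $\frac{t_y^2}{2}\sup|D^2G|$ over the fixed annulus $\{r_1\le |x|\le r_2\}$, which sums to $O(t_y)\to 0$, so no lower bound on $|G'|$ is needed.
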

\bigskip
Specifically we choose $a_{i,k_0} < r_1 = y_{i, \min} \le r_2 = \frac{1}{L}$, and get, as in \cite{Pat25}, that,
\begin{align}
2\alpha_i^* - \frac{1}{C_1} \alpha_i^* &> G(y_{i, \min}) - G(\frac{1}{L} s_{\min}) \\
&\ge C \left| \frac{\partial G}{\partial r} \right|_{b_{i,k}} a_{i,k}^{n-1} \left( \frac{1}{y_{i, \min}^{n-2}} - L^{n-2} \right) 
\end{align}
Further, we also have,
\begin{equation} \label{eq31}
2\alpha_{i}^* - \frac{1}{C_1} \alpha_{i}^{*}> C \left| \frac{\partial G}{\partial r} \right|_{\bar{a}_{i,k}} \bar{a}_{i,k}^{n-1} \left( \frac{1}{y_{i, \min}^{n-2}} - L^{n-2} \right)
\end{equation}

As $y_{i, \min} \to 0$ as $i \to \infty$, we choose the $k_0$ value appropriately so that $a_{i,k_0} < y_{i, \min}$, $\bar{a}_{i,k_0} < y_{i, \min}$, and so we will have
\begin{equation*}
\alpha_i^* \gtrsim C' \left| \frac{\partial G}{\partial r} \right|_{a_{i,k_0}} a_{i,k_0}^{n-1} \frac{1}{y_{i, \min}^{n-2}}, \quad \alpha_{i}^{*} \gtrsim C' \left| \frac{\partial G}{\partial r} \right|_{\bar{a}_{i,k_0}} \bar{a}_{i,k_0}^{n-1} \frac{1}{y_{i, \min}^{n-2}}
\end{equation*}

Here $C_1 \ge 1$ is a constant arising from the Harnack inequality.

Thus, comparing these above upper and lower estimates on $\alpha^{*}_i$, we get,

\begin{multline}
\frac{1}{2} \left( C' \left| \frac{\partial G}{\partial r} \right|_{a_{i,k_0}} a_{i,k_0}^{n-1} \frac{1}{y_{i, \min}^{n-2}} + C' \left| \frac{\partial G}{\partial r} \right|_{\bar{a}_{i,k_0}} \bar{a}_{i,k_0}^{n-1} \frac{1}{y_{i, \min}^{n-2}} \right)  \\ \le \alpha_{i}^{*} \le \left| \frac{\partial G}{\partial r} \right|_{b_{i,k}} b_{i,k}^{n-1} \left( L^{n-2} + \left( \frac{2}{R} \right)^{n-1} WD \right) + WD
\end{multline}

So, 
\begin{equation*}
\frac{1}{2} C' \left( \left| \frac{\partial G}{\partial r} \right|_{a_{i,k_0}} a_{i,k_0}^{n-1} + \left| \frac{\partial G}{\partial r} \right|_{\bar{a}_{i,k_0}} \bar{a}_{i,k_0}^{n-1} \right) \frac{1}{y_{i, \min}^{n-2}} \le \left| \frac{\partial G}{\partial r} \right|_{b_{i,k}} b_{i,k}^{n-1} \left( L^{n-2} + \left( \frac{2}{R} \right)^{n-1} WD \right) + WD
\end{equation*}

Now we will invoke \cref{lemma4,lemma5}, to get the desired contradiction. From \cref{lemma4}, we get that
\begin{equation*}
\frac{1}{2} C' \left| \frac{\partial G}{\partial r} \right|_{a_{i,k_0}} a_{i,k_0}^{n-1} > \frac{1}{M_0'} \left| \frac{\partial G}{\partial r} \right|_{b_{i,k}} b_{i,k}^{n-1}, 
\end{equation*}
 $\text{ and so as } y_{i, \min} \to 0$ we get $$\frac{1}{2} C' \left| \frac{\partial G}{\partial r} \right|_{a_{i,k_0}} a_{i,k_0}^{n-1} \frac{1}{y_{i, \min}^{n-2}} > \left| \frac{\partial G}{\partial r} \right|_{b_{i,k}} b_{i,k}^{n-1} \left( L^{n-2} + \Big(\frac{2}{R}\Big)^{n-1} WD \right)$$ as $i \to \infty$.

Similarly, from \cref{lemma5}, we get that for $i \to \infty$, we have
\begin{equation*}
\frac{1}{2} C' \left| \frac{\partial G}{\partial r} \right|_{\bar{a}_{i,k_0}} \bar{a}_{i,k_0}^{n-1} \frac{1}{y_{i, \min}^{n-2}} > W_1 D
\end{equation*}

Thus for $y_{i, \min}$ small enough, depending on the parameters $L$ (which in turn depends on $M$ and $D$), we get a contradiction, and thus the Case (1) is proved.

\section{Proof for Case 2}
Now we complete the proof for Case 2.
This argument follows from the argument in \cite{Pat25} with minor modifications, along with adopting the argument of Case (1) of this paper. In this case, we had defined $\tilde{\alpha}_N = \max_{S(0, R/2)} G(\cdot)$.

For each large enough integer $N$, we have $f(\alpha_N) > N f(\alpha_{2N})$ for some $\alpha_N > \tilde{\alpha}_N$. Thus there exist boundary points $z_N \in \partial \Omega_N, z_{2N} \in \partial \Omega_{2N}$ with $z_{2N}$ being the point at the minimum distance to $\partial \Omega_{2N}$ from the origin, and $z_N$ the point at the maximum distance to $\partial \Omega_N$ from the origin, and by definition $z_N \in B(0, R/2)$.
As $N \to \infty$, we have $\frac{|z_N|}{|z_{2N}|} \to \infty$. Consider the value $\alpha^* = \max_{x \in S(0, |z_N|)} G(x) = G(z_N, 0)$.

Now as in the Case (1), we have sequences $\{a_{N,k}\}_{k=1}^{\infty}, \{\bar{a}_{N,k}\}_{k=1}^{\infty}, \{b_{N,k}\}_{k=1}^{\infty}$ satisfying the conditions of \cref{lemma4,lemma5}, so that we have for some constant $L_1>0$,
\begin{equation} \label{eq33}
\alpha_N^* \le C' \left| \frac{\partial G}{\partial r} \right|_{b_{N,k}} b_{N,k}^{n-1} \left( \frac{1}{|z_N|^{n-2}} + \left( \frac{2}{R} \right)^{n-1} L_1 D \right) + L_1 D
\end{equation}

Further,
\begin{equation} \label{eq34}
\alpha_N^* \gtrsim C \left| \frac{\partial G}{\partial r} \right|_{a_{N,k}} a_{N,k}^{n-1} \left( \frac{1}{|z_{2N}|^{n-2}} - \frac{1}{|z_N|^{n-2}} \right)
\end{equation}
\begin{equation} \label{eq35}
\bar{\alpha}_N^* \gtrsim C \left| \frac{\partial G}{\partial r} \right|_{\bar{a}_{N,k}} \bar{a}_{N,k}^{n-1} \left( \frac{1}{|z_{2N}|^{n-2}} - \frac{1}{|z_N|^{n-2}} \right)
\end{equation}

Thus, as $|z_{2N}|/|z_N| \to 0$ as $N \to \infty$, \label{eq33,eq34,eq35} give us a contradiction. In the case where instead of \cref{eq33}, we have..

\begin{equation*}
\alpha_N^* \le \left| \frac{\partial G}{\partial r} \right|_{b_{N,k}} b_{N,k}^{n-1} \left( \frac{1}{|z_N|^{n-2}} \right) + L_1 D
\end{equation*}
or when $\alpha_N^* \le L_1 D$, the argument also adopts immediately and we have the result.

This completes the proof of \cref{thm2}.\quad
\vspace{1em}

\textbf{Remark:} Note that our construction in Subsection 2 gives an alternate argument for the result of Theorem 3 of \cite{Pat25}. In particular, as the pole of the Green's function is moved arbitrarily close to the boundary $\partial K$, the $r_y$ parameters of Subsection 4.1 have worse bounds, leading to a worse $L_1$ parameter, as the pole is taken arbitrarily close to the boundary, which leads to admissible $y_{i,\min}$ values to be arbitrarily small, leading to worse point-wise upper bounds through Equations 41 and 42 of \cite{Pat25}.

\section{Acknowledgements:} The author thanks Debanjan Nandi for helpful discussions.

\bigskip

Email: ap7mx@missouri.edu.

\end{document}